\newcommand{\g}{\mathfrak{g}_2}
\newcommand{\got}[1]{\mathfrak{#1}}
\newtheorem{theoremalph}{Theorem}
\begin{document}

\begin{center}
\textbf{\Large{The space of generalized $G_2$-theta functions}}\\
\vspace{1cm}
Chloé GREGOIRE\\
Institut Fourier, University Grenoble I, Grenoble, France\\
E-mail: \url{chloe.gregoire@ujf-grenoble.fr}
\end{center}

\begin{Abstract}
Let $G_2$ be the exceptional Lie group of automorphisms of the complex Cayley algebra and
 $C$ be a smooth, connected, projective curve of genus at least $2$. 
Using the map obtained from extension of structure groups, 
 we prove explicit links between the space of generalized $G_2$-theta functions over $C$ and spaces of generalized theta functions associated to the classical Lie groups $SL_2$ and $SL_3$.
\end{Abstract}

\begin{Keywords}
Generalized theta functions, moduli stacks, principal bundles.
\end{Keywords}
\begin{MathClass}
14H60
\end{MathClass}

\section{Introduction}
Throughout this paper we fix a smooth, connected, projective curve $C$ of genus at least $2$.
For a complex Lie group $G$ we denote by
$\mathcal{M}_C(G)$ the moduli stack of principal $G$-bundles and by $\mathcal{L}$ the ample line bundle that generates the Picard group $\mathrm{Pic}(\mathcal{M}_C(G))$.
The spaces $H^0(\mathcal{M}_C(G),\mathcal{L}^{l})$ of generalized $G$-theta functions of level $l$ are well-known for classical Lie groups but
less understood for exceptional Lie groups. 
Let $G_2$ be the smallest exceptional Lie group: the group of automorphisms of the complex Cayley algebra.
Our aim is to relate the space of generalized $G_2$-theta functions $H^0(\mathcal{M}_C(G_2),\mathcal{L})$ of level one
to other spaces of generalized theta functions associated to classical Lie groups.

Using the Verlinde formula, which gives the dimension of the space of generalized $G$-theta functions for any simple and simply-connected Lie group $G$,
we observe a numerical coincidence: $$\dim H^0(\mathcal{M}_C(SL_2),\mathcal{L}^3)=2^g \dim H^0(\mathcal{M}_C(G_2),\mathcal{L}).$$ In addition, we link together $\dim H^0(\mathcal{M}_C(G_2),\mathcal{L})$ and $\dim H^0(\mathcal{M}_C(SL_3),\mathcal{L})$.
Our aim is to give a geometric interpretation of these dimension equalities.

According to the Borel-De Siebenthal classification \cite{BorelDeSiebenthal},
 the groups $SL_3$ and $SO_4$ appear as the two maximal subgroups of $G_2$ among the connected subgroups of $G_2$ of maximal rank.
 We define two linear maps by pull-back of the corresponding extension maps: on the one hand
 $$H^0(\mathcal{M}_C(G_2),\mathcal{L})\rightarrow H^0(\mathcal{M}_C(SL_3),\mathcal{L})$$ 
 and on the other hand using the isogeny $SL_2\times SL_2\rightarrow SO_4$:
 $$H^0(\mathcal{M}_C(G_2),\mathcal{L})\rightarrow 
H^0(\mathcal{M}_C(SL_2), \mathcal{L})\otimes  H^0(\mathcal{M}_C(SL_2), \mathcal{L}^{3}).$$
These maps take values in the invariant part by the duality involution for the first map and by the action of 
$2$-torsion elements of the Jacobian for the second one.
We denote these invariant spaces by
$H^0(\mathcal{M}_C(SL_3),\mathcal{L})_+$ and 
$\left[H^0(\mathcal{M}_C(SL_2), \mathcal{L}_{SL_2})\otimes  H^0(\mathcal{M}_C(SL_2), \mathcal{L}_{SL_2}^{3})\right]_0            $ respectively.

Using the natural isomorphism proved in \cite{BNR}:
$$H^0(\mathcal{M}_C(SL_3),\mathcal{L})^*\simeq H^0(\mathrm{Pic}^{g-1}(C), 3 \Theta)$$
where $\Theta =\{L \in \mathrm{Pic}^{g-1}(C)\ | \ h^0(C,L)>0 \}$, we prove the following theorem:
\begin{theoremalph}
The linear map 
$$ \Phi :H^0(\mathcal{M}_C(G_2),\mathcal{L})\rightarrow H^0(\mathcal{M}_C(SL_3),\mathcal{L})_+$$
 obtained by pull-back of the extension map $\mathcal{M}_C(SL_3)\rightarrow \mathcal{M}_C(G_2)$ is surjective for a general curve and is an isomorphism when the genus of the curve equals $2$.
\end{theoremalph}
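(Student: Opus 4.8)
\noindent
The plan is to produce preimages under $\Phi$ by a Pfaffian construction on $\mathcal{M}_C(G_2)$ and to pin down the two relevant dimensions, on one side via the isomorphism of \cite{BNR} and on the other via the Verlinde formula.

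\smallskip
\noindent
First I would record the shape of the map. The subgroup $SL_3\subset G_2$ in question is the maximal-rank subgroup generated by the long roots (one of the two Borel--de Siebenthal subgroups); a long root of this $SL_3$ is a long root of $G_2$, so its Dynkin index is $1$, the extension map $\epsilon\colon\mathcal{M}_C(SL_3)\to\mathcal{M}_C(G_2)$ pulls $\mathcal{L}$ back to $\mathcal{L}$, and $\Phi=\epsilon^{*}$ is defined without twist. For the target: the longest element of $W(G_2)$ is $-\mathrm{id}$, it normalizes this $SL_3$, and a lift $n\in N_{G_2}(SL_3)$ induces by conjugation the duality involution $\sigma$ on $\mathcal{M}_C(SL_3)$ while being inner on $\mathcal{M}_C(G_2)$; hence $\epsilon\circ\sigma\simeq\epsilon$, so $\sigma^{*}\circ\Phi=\Phi$ and $\operatorname{im}\Phi\subseteq H^{0}(\mathcal{M}_C(SL_3),\mathcal{L})_{+}$.

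\smallskip
\noindent
Next the two dimensions. By \cite{BNR} the identification $H^{0}(\mathcal{M}_C(SL_3),\mathcal{L})^{*}\simeq H^{0}(\mathrm{Pic}^{g-1}(C),3\Theta)$ is $\sigma$-equivariant, and by Serre duality on $C$ the involution $\sigma$ goes over to $\iota\colon L\mapsto K_C\otimes L^{-1}$ on $\mathrm{Pic}^{g-1}(C)$, whose fixed locus is the set of $2^{2g}$ theta characteristics. Translating by one of them identifies $\iota$ with $-1$ on $J(C)$ and $3\Theta$ with a symmetric line bundle of type $(3,\dots,3)$; since the eigenvalue of $\iota^{*}$ on the fibre over an even theta characteristic is $+1$, the holomorphic Lefschetz formula gives $\operatorname{tr}\iota^{*}=1$, hence $\dim H^{0}(\mathcal{M}_C(SL_3),\mathcal{L})_{+}=\tfrac12(3^{g}+1)$. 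On the other hand the fusion category of $G_2$ at level one has only two simple objects, the unit and the $7$-dimensional module, and the Verlinde formula gives $\dim H^{0}(\mathcal{M}_C(G_2),\mathcal{L})$, which equals $5=\tfrac12(3^{2}+1)$ for $g=2$ but is strictly larger than $\tfrac12(3^{g}+1)$ for $g>2$. So $\Phi$ can be an isomorphism only when $g=2$, in which case it is enough to prove surjectivity.

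\smallskip
\noindent
For surjectivity I would construct preimages explicitly. Over the image of $\mathcal{M}_C(SL_3)$ the rank-$7$ bundle attached to a $G_2$-bundle is $W_E=E\oplus E^{*}\oplus\mathcal{O}_C$; it carries the $G_2$-invariant orthogonal form, so for a theta characteristic $\kappa$ the bundle $W_P\otimes\kappa$ of a $G_2$-bundle $P$ is isomorphic to its Serre dual. Consequently the generalized theta divisor $\{P:h^{0}(W_P\otimes\kappa)>0\}$, which lies a priori in $|\mathcal{L}^{\otimes 2}|$ because the $7$-dimensional representation has Dynkin index $2$, is non-reduced: the orthogonal structure makes it $2\,P_{\kappa}$ for a Pfaffian divisor $P_{\kappa}\in|\mathcal{L}|$, and the associated section $\mathrm{pf}_{\kappa}\in H^{0}(\mathcal{M}_C(G_2),\mathcal{L})$ satisfies, using $h^{0}(E^{*}\otimes\kappa)=h^{0}(E\otimes\kappa)$ and $h^{0}(\kappa)=0$ for a general even $\kappa$, $\Phi(\mathrm{pf}_{\kappa})=\theta_{\kappa}$ up to a scalar, where $\theta_{\kappa}$ is the $SL_3$-theta function with divisor $\{E:h^{0}(E\otimes\kappa)>0\}$. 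It then remains to see that the $\theta_{\kappa}$, with $\kappa$ running over the even theta characteristics, span $H^{0}(\mathcal{M}_C(SL_3),\mathcal{L})_{+}$; via \cite{BNR} this is exactly the statement that no nonzero $\iota$-invariant section of $3\Theta$ vanishes at every even theta characteristic, a separation property for the type $(3,\dots,3)$ theta representation along the even $2$-torsion that holds for a general curve. This is where I expect the real difficulty: the line bundle $\mathcal{L}$ on $\mathcal{M}_C(G_2)$ is not a pullback of the determinant bundle from any classical group (every representation of $G_2$ has even Dynkin index), so there is no BNR-type model to import, and one must control the Pfaffian divisors and the spanning by theta characteristics directly. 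Finally, when $g=2$ every even theta characteristic is non-effective, so the construction applies to all curves, and surjectivity together with $\dim H^{0}(\mathcal{M}_C(SL_3),\mathcal{L})_{+}=5=\dim H^{0}(\mathcal{M}_C(G_2),\mathcal{L})$ forces $\Phi$ to be an isomorphism.
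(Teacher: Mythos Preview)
Your approach is essentially the paper's: you use the same Weyl-group argument to land in the $+$~eigenspace, the same dimension comparison ($\tfrac12(3^g+1)$ versus the Verlinde number) for the $g=2$ isomorphism, and the same mechanism for surjectivity --- pull back the $SL_7$ theta divisors $\Delta_\kappa=\{h^0(W\otimes\kappa)>0\}$ for even $\kappa$, then show their images are the $H_\kappa=\{E:h^0(E\otimes\kappa)>0\}$. In fact you are more careful than the paper on one point: you make the Pfaffian square root explicit (the $7$-dimensional representation has Dynkin index~$2$, so $\rho_1^*\Delta_\kappa$ sits in $|\mathcal{L}_{G_2}^2|$, and one needs the orthogonal structure to halve it inside $|\mathcal{L}_{G_2}|$), whereas the paper writes ``$\Phi(\rho_1^*(\Delta_\kappa))$'' as if it were already a level-one section.

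The one genuine gap is the spanning step. You assert that the $\theta_\kappa$ (equivalently, under \cite{BNR}, the points $\varphi_{3\Theta}(\kappa)$) span $H^0(\mathcal{M}_C(SL_3),\mathcal{L})_+$ ``for a general curve'' but do not supply an argument. The paper does supply one, and it is short: the inclusion $|3\Theta|_+\hookrightarrow|4\Theta|_+$, $D\mapsto D+\Theta$, dualizes to a surjection $|4\Theta|^*_+\twoheadrightarrow|3\Theta|^*_+$ compatible with $\varphi_{4\Theta}$ and $\varphi_{3\Theta}$; and by the result cited as \cite{SermanKopeliovichPauly}, when $C$ has no effective theta-constant the $2^{g-1}(2^g+1)$ points $\varphi_{4\Theta}(\kappa)$ for $\kappa$ even are a \emph{basis} of $|4\Theta|^*_+$. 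Pushing down gives the spanning in $|3\Theta|^*_+$. This also sharpens ``general curve'' to the explicit open condition ``no effective theta-constant'', which is automatic for $g=2$ and feeds directly into your isomorphism argument.
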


A curve is said satisfying the cubic normality when the multiplication map $\mathrm{Sym}^3 H^0(\mathcal{M}_C(SL_2), \mathcal{L}) \rightarrow H^0(\mathcal{M}_C(SL_2), \mathcal{L}^{3})$ is surjective. 
Using an explicit basis of $H^0(\mathcal{M}_C(SL_2), \mathcal{L}^2)$ described in \cite{BeauvilleII}, we prove the theorem:
\begin{theoremalph}
The linear map 
$$\Psi : H^0(\mathcal{M}_C(G_2),\mathcal{L}_{G_2})\rightarrow 
\left[H^0(\mathcal{M}_C(SL_2), \mathcal{L}_{SL_2})\otimes  H^0(\mathcal{M}_C(SL_2), \mathcal{L}_{SL_2}^{3})\right]_0            $$ 
obtained by pull-back of the extension map $\mathcal{M}_C(SO_4)\rightarrow \mathcal{M}_C(G_2)$
is an isomorphism for a general curve satisfying the cubic normality. 
\end{theoremalph}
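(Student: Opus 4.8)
\medskip
The plan is to realise $\Psi$ as the pull-back $\pi^{*}$ along the composite
$$\pi\colon\ \mathcal{M}_C(SL_2)\times\mathcal{M}_C(SL_2)=\mathcal{M}_C(SL_2\times SL_2)\longrightarrow\mathcal{M}_C(SO_4)\longrightarrow\mathcal{M}_C(G_2),$$
where the first map is extension of structure group along the central isogeny $SL_2\times SL_2\to SO_4$ with kernel the diagonal $\mu_2$. The first step is to identify $\pi^{*}\mathcal{L}_{G_2}$. Inside the maximal rank subsystem $A_1\times\widetilde{A}_1$ of $G_2$ the first $SL_2$-factor is generated by a long root and the second by a short root, hence they have Dynkin indices $1$ and $3$; one reads this off from the restriction of the $7$-dimensional representation of $G_2$ to $SL_2\times SL_2$, which decomposes as $(V\boxtimes V')\oplus(\mathbb{C}\boxtimes\mathrm{Sym}^{2}V')$, where $V,V'$ are the two standard representations. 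So $\pi^{*}\mathcal{L}_{G_2}\simeq\mathcal{L}_{SL_2}\boxtimes\mathcal{L}_{SL_2}^{3}$, and since the fibres of $\mathcal{M}_C(SL_2\times SL_2)\to\mathcal{M}_C(SO_4)$ are torsors under $J_C[2]$ acting diagonally (simultaneous tensorisation with $2$-torsion line bundles), the Künneth formula identifies $\pi^{*}$ with the map $\Psi$ of the statement, taking values in the diagonal $J_C[2]$-invariant subspace $[\,\cdot\,]_0$.

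Then comes the dimension count. Write $V=H^0(\mathcal{M}_C(SL_2),\mathcal{L}_{SL_2})$; by the Beauville--Narasimhan--Ramanan theorem this is the irreducible Schr\"odinger representation of the theta group of $\mathcal{L}_{SL_2}$ (the $\mathbb{C}^{*}$-central extension of $J_C[2]$ linearising its action), whose commutator pairing is the non-degenerate Weil pairing, so $\dim V=2^{g}$. Since $3$ is odd, the theta group of $\mathcal{L}_{SL_2}^{3}$ has the same commutator pairing, hence a unique irreducible representation with the natural central character, so $H^0(\mathcal{M}_C(SL_2),\mathcal{L}_{SL_2}^{3})$ is isotypic over it; and because $\mathcal{L}_{SL_2}\boxtimes\mathcal{L}_{SL_2}^{3}$ descends to $\mathcal{M}_C(SO_4)$, the diagonal $J_C[2]$-action on $V\otimes H^0(\mathcal{L}_{SL_2}^{3})$ is genuine and makes this space a multiple of the regular representation of $J_C[2]$. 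Hence
$$\dim\Bigl[H^0(\mathcal{M}_C(SL_2),\mathcal{L}_{SL_2})\otimes H^0(\mathcal{M}_C(SL_2),\mathcal{L}_{SL_2}^{3})\Bigr]_0=2^{-g}\dim H^0(\mathcal{M}_C(SL_2),\mathcal{L}_{SL_2}^{3})=\dim H^0(\mathcal{M}_C(G_2),\mathcal{L}_{G_2}),$$
the last equality being the numerical coincidence recorded in the introduction. Thus source and target of $\Psi$ have equal dimension, and it suffices to prove that $\Psi$ has maximal rank.

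For this I would use cubic normality, which gives a $J_C[2]$-equivariant surjection $H^0(\mathcal{M}_C(SL_2),\mathcal{L}_{SL_2})\otimes H^0(\mathcal{M}_C(SL_2),\mathcal{L}_{SL_2}^{2})\to H^0(\mathcal{M}_C(SL_2),\mathcal{L}_{SL_2}^{3})$. Restricting a generalised $G_2$-theta function to the image of $\pi$, using the Beauville--Narasimhan--Ramanan identification $V^{*}\simeq H^0(\mathrm{Pic}^{g-1}(C),2\Theta)$, and substituting the explicit basis of $H^0(\mathcal{M}_C(SL_2),\mathcal{L}_{SL_2}^{2})$ of \cite{BeauvilleII} (products of theta constants), one turns $\Psi$ into an explicit system of theta-function identities on $\mathrm{Pic}^{g-1}(C)$, twisted by translations by $2$-torsion, whose associated matrix of theta constants must be shown to have maximal rank, character by character of $J_C[2]$.

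The inclusion ``image of $\Psi$ $\subseteq$ invariant subspace'' and the dimension count are formal; the real content, and the main obstacle, is the reverse inclusion --- that nothing is missed, i.e.\ that the above matrix of theta constants has maximal rank. This is where genericity of $C$ enters --- having maximal rank is an open condition on the moduli space of curves which one must verify is non-empty --- and it is for this verification that the explicit basis of \cite{BeauvilleII} is indispensable. For $g=2$, where $\mathcal{M}_C(SL_2)\simeq\mathbb{P}^{3}$, one can hope to make the computation completely explicit, in parallel with Theorem~A.
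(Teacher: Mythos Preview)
Your setup --- the identification $\pi^{*}\mathcal{L}_{G_2}\simeq\mathcal{L}_{SL_2}\boxtimes\mathcal{L}_{SL_2}^{3}$, the fact that the image lands in the diagonal $J_C[2]$-invariants, and the dimension count --- is correct and matches the paper (your theta-group argument is a slightly more conceptual version of the paper's direct $\frac{1}{|J_C[2]|}\cdot 2^{g}\cdot h^{0}(\mathcal{L}^{3})$ computation).

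The gap is in the surjectivity step. You anticipate having to verify that some matrix of theta constants has maximal rank on a non-empty open locus of moduli, and you treat this as the ``real content''. In the paper's argument no such verification is needed: everything is reduced \emph{formally} to cubic normality (plus the absence of effective theta-constants). The missing idea is to produce explicit elements of $H^{0}(\mathcal{M}_C(G_2),\mathcal{L})$ whose images under $\Psi$ can be computed on the nose. The paper pulls back the theta-divisors $\Delta_{\kappa}$ from $\mathcal{M}_C(SO_7)$ along the extension map $\rho_0\colon\mathcal{M}_C(G_2)\to\mathcal{M}_C(SO_7)$; since the rank-$7$ bundle attached to $(E,F)$ is $\mathrm{Hom}(E,F)\oplus\mathrm{End}_0(F)$, one reads off directly that
\[
\Psi\bigl(\rho_0^{*}\Delta_{\kappa}\bigr)=\xi_{\kappa}\cdot d_{\kappa}\ \in\ \bigl[V\otimes H^{0}(\mathcal{M}_C(SL_2),\mathcal{L}^{3})\bigr]_{0},
\]
where $\xi_{\kappa}\in\mathrm{Sym}^{2}V\subset V\otimes V$ and $d_{\kappa}\in H^{0}(\mathcal{M}_C(SL_2),\mathcal{L}^{2})$ are Beauville's sections, and the product is the multiplication map $\beta\colon [V\otimes V\otimes H^{0}(\mathcal{L}^{2})]_{0}\to[V\otimes H^{0}(\mathcal{L}^{3})]_{0}$ in the second and third factors. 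For $C$ without effective theta-constant, the $\{\xi_{\kappa}\}$ form a basis of $\mathrm{Sym}^{2}V$ and the $\{d_{\kappa}\}$ a basis of $H^{0}(\mathcal{L}^{2})$, and under the isomorphism $\varphi_0^{*}\colon\xi_{\kappa}\mapsto d_{\kappa}$ the tensors $\xi_{\kappa}\otimes d_{\kappa}$ span $[V\otimes V\otimes H^{0}(\mathcal{L}^{2})]_{0}$. Hence surjectivity of $\Psi$ is equivalent to surjectivity of $\beta$, and this is exactly cubic normality restricted to $J_C[2]$-invariants. No further genericity or rank computation is required; the ``open condition'' you worry about is precisely cubic normality itself, which is assumed.
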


\begin{nota}
We use the following notations:
\begin{itemize}
\item
$C$ a smooth, connected, projective curve of genus $g$ at least 2,
\item
$G$ a connected and simply-connected simple complex Lie group,
\item
$\mathcal{M}_C(G)$ the moduli stack of principal $G$-bundles on $C$,
\item
$K_C$ the canonical bundle on $C$ ,
\item
 $\mathrm{Pic}^{g-1}(C)$ the Picard group parametrizing line bundles on $C$ of degree $g-1$,
 \item
 $h^0(X,\mathcal{L})=\dim H^0(X,\mathcal{L})$.
\end{itemize}
\end{nota}

\section{Principal $G_2$-bundles arising from vector bundles of rank two and three}

\subsection{The octonions algebra}
Let $\OO$ be the complex algebra of the octonions, $\mathrm{Im}(\OO)$ the $7$-dimensional subalgebra of the imaginary part of $\OO$ and $\mathcal{B}_0=(e_1, \dots, e_7)$ the canonical basis of $\mathrm{Im}(\OO)$(see \cite{Baez}).
The exceptional Lie group $G_2$ is the group of automorphisms of the octonions.

In Appendix \ref{appendix The octonion algebra} we give the multiplication table in $\mathrm{Im}(\OO)$ by the Fano diagram and we introduce another basis $\mathcal{B}_1=(y_1, \dots, y_7)$ of $\mathrm{Im}(\OO)$ 
so that $\langle y_1, y_2, y_3\rangle$ and $\langle y_4, y_5, y_6 \rangle$ are isotropic and orthogonal and $y_7$ is orthogonal to both of these subspaces. This basis is defined in Appendix \ref{appendix The octonion algebra}, as well as two other basis obtained by permutation of elements of $\mathcal{B}_1$:
$\mathcal{B}_2=(y_2,y_3,y_4,y_5,y_6,y_1,y_7)$ and $\mathcal{B}_3=(y_1,y_2,y_4,y_5,y_3,y_6,y_7)$.

In the following paragraphs, we use local sections of rank-$7$ vector bundles satisfying multiplication rules of $\mathcal{B}_2$ or $\mathcal{B}_3$.

\subsection{Principal $G_2$-bundles admitting a reduction}
We introduce the notion of non-degenerated trilinear form on $\mathrm{Im}(\OO)$  as Engel did it in \cite{Engel1} and\cite{Engel2}.
A trilinear form $\omega$ on $\mathrm{Im}(\OO)$ is said non-degenerated if the associated bilinear symmetric form $B_\omega$ is non-degenerated where $B_\omega(x,y) =\omega(x,\cdot,\cdot)\wedge \omega(y,\cdot,\cdot)\wedge \omega(\cdot,\cdot,\cdot)$ $\forall x, y \in \mathrm{Im}(\OO)$.
\begin{lem}
Giving a principal $G_2$-bundle is equivalent to giving a rank-$7$ vector bundle with a non-degenerated trilinear form.
\end{lem}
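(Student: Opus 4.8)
The plan is to realise a principal $G_2$-bundle as a reduction of structure group along the embedding $G_2\hookrightarrow GL_7$ afforded by the standard $7$-dimensional representation on $\mathrm{Im}(\OO)$, and then to make these reductions explicit by means of the trilinear form. The one substantial ingredient is the classical description of $G_2$ going back to Engel \cite{Engel1,Engel2}: inside $GL(\mathrm{Im}(\OO))\cong GL_7$, the subgroup $G_2=\mathrm{Aut}(\OO)$ is the stabiliser of the trilinear form $\omega_0\in\Lambda^3\mathrm{Im}(\OO)^*$, $\omega_0(x,y,z)=\langle xy,z\rangle$ (from $\omega_0$ one recovers, up to scale, the norm form as $B_{\omega_0}$, hence the full multiplication of $\OO$); moreover $GL_7$ acts transitively on the locus $\mathcal{O}\subset\Lambda^3\mathrm{Im}(\OO)^*$ of non-degenerated trilinear forms, so that $GL_7/G_2\cong\mathcal{O}$. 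Granting this, the lemma is an instance of the general dictionary identifying reductions of structure group of a $GL_7$-bundle $E$ along $G_2\hookrightarrow GL_7$ with sections of the associated bundle with fibre $GL_7/G_2\cong\mathcal{O}$, that is, with nowhere-degenerate trilinear forms on $E$.

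In the first direction I would attach to a principal $G_2$-bundle $P$ on $C$ — or on $C\times S$ for a family — the pair $(E,\omega)$ with $E=P\times_{G_2}\mathrm{Im}(\OO)$ the rank-$7$ vector bundle associated to the standard representation and $\omega\in H^0(C,\Lambda^3 E^*)$ the section obtained by glueing the $G_2$-invariant form $\omega_0$. Since $G_2$ preserves also the non-degenerate form $B_{\omega_0}$, the induced symmetric form $B_\omega$ is non-degenerate on each fibre, so $\omega$ is a non-degenerated trilinear form on $E$; and this assignment is plainly functorial in $P$ and compatible with base change.

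Conversely, given $(E,\omega)$ with $\omega$ fibrewise non-degenerate, I would form
$$P=\underline{\mathrm{Isom}}_{\omega}\bigl((\mathrm{Im}(\OO),\omega_0),(E,\omega)\bigr),$$
whose fibre over a point is the set of linear isomorphisms $\mathrm{Im}(\OO)\to E$ carrying $\omega_0$ to $\omega$. The group $G_2=\mathrm{Stab}_{GL_7}(\omega_0)$ acts on $P$ on the right, simply transitively on fibres by the transitivity of $GL_7$ on $\mathcal{O}$; and $P$ is locally trivial, since over a trivialising chart for $E$ the form $\omega$ is a section of the $\mathcal{O}$-bundle and, $\mathcal{O}$ being a smooth variety, the implicit function theorem yields étale-locally a frame in which $\omega=\omega_0$. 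Thus $P$ is a principal $G_2$-bundle, functorially in $(E,\omega)$ and in the base. A routine verification shows the two constructions are mutually quasi-inverse; phrased in terms of stacks, the morphism $\mathcal{M}_C(G_2)\to\mathcal{M}_C(GL_7)$ induced by $G_2\hookrightarrow GL_7$ identifies $\mathcal{M}_C(G_2)$ with the substack of rank-$7$ vector bundles endowed with a non-degenerated trilinear form.

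The step I expect to be the real obstacle is the linear-algebra input of the first paragraph. That the $GL_7$-orbit of $\omega_0$ is open comes from the dimension count $\dim GL_7-\dim G_2=49-14=35=\dim\Lambda^3\mathrm{Im}(\OO)^*$; that this open orbit is exactly the non-degeneracy locus $\mathcal{O}$ follows because $\mathcal{O}$ is open, $GL_7$-stable and contains $\omega_0$, while its complement is the zero locus of the semi-invariant $\det B_\omega$, which is a power of the fundamental relative invariant of the prehomogeneous vector space $(GL_7,\Lambda^3\mathrm{Im}(\OO))$ and so carves out precisely the complement of the dense orbit; and the identification of the stabiliser with $G_2$ is Engel's theorem — here, over $\mathbb{C}$, a little care is needed if one wants equality on the nose rather than up to a finite central subgroup, which amounts to keeping track of the volume form induced by $\omega$. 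Everything else is formal bookkeeping with associated bundles.
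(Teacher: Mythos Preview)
Your approach coincides with the paper's: both use Engel's description of $G_2$ as the stabiliser of a generic $3$-form, identify the $GL_7$-orbit of $\omega_0$ with the locus of non-degenerate forms, and translate a reduction of structure group along $G_2\hookrightarrow GL_7$ into a section of the associated orbit bundle. Your construction of the inverse via the frame bundle $\underline{\mathrm{Isom}}_\omega$ is in fact more explicit than the paper's one-line ``reciprocally''.

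The one place where the paper is sharper is the stabiliser computation. You write $G_2=\mathrm{Stab}_{GL_7}(\omega_0)$; the paper records instead that $\mathrm{Stab}_{GL_7}(\omega_0)\simeq G_2\times\mathbb{Z}/3\mathbb{Z}$ (the extra factor consists of the scalars $\zeta I$ with $\zeta^3=1$, which lie outside $SL_7$ since $\zeta^7\neq 1$), while $\mathrm{Stab}_{SL_7}(\omega_0)=G_2$ exactly. Accordingly the paper factors the reduction as
\[
C\xrightarrow{\sigma} GL_7/G_2 \twoheadrightarrow GL_7/(G_2\times\mathbb{Z}/3\mathbb{Z})\simeq \mathrm{Orb}_{GL_7}(\omega_0).
\]
You clearly sense the issue---your closing caveat about ``a finite central subgroup'' and the induced volume form is precisely this $\mu_3$---but as written your $\underline{\mathrm{Isom}}_\omega$ is a priori a principal $(G_2\times\mu_3)$-bundle, and the step that extracts the honest $G_2$-bundle (equivalently, lifts the section through the $3$-to-$1$ cover) is left implicit. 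Neither argument spells the converse out in full; the paper simply names the obstruction correctly.
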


\begin{proof}
Let $P$ be a principal $G_2$-bundle on $C$ and $V$ the associated rank-$7$ vector bundle and let $\omega$ be any non-degenerated trilinear form on $\mathrm{Im}(\OO)$.
By construction, $V$ has a reduction to $G_2$, \textit{i.e.} it exists a section $\sigma : C\rightarrow GL_7/G_2$.
The Lie group $G_2$ is the stabilizer $\mathrm{Stab}_{SL_7}(\omega)$ under the action of $SL_7$. Besides, under the action of $GL_7$, $\mathrm{Stab}_{GL_7}(\omega_0)\simeq G_2 \times \mathbb{Z}/3\mathbb{Z}$. Then
$$ \sigma : C\stackrel{\sigma}{\rightarrow} GL_7/G_2 \twoheadrightarrow GL_7/(G_2 \times \mathbb{Z}/3\mathbb{Z})\simeq  GL_7/ \mathrm{Stab}_{GL_7}(\omega) \simeq \mathrm{Orb}_{GL_7}(\omega).$$
In addition, the orbit $\mathrm{Orb}_{GL_7}(\omega)$ is the set of all the non-degenerated trilinear form on $\mathrm{Im}(\OO)$.
So, $V$ is fitted with a non-degenerated trilinear form.
Reciprocally any rank-$7$ vector bundle fitted with a non-degenerated trilinear form defines a $G_2$-vector bundle.
\end{proof}

For a principal $G_2$-bundle, we use the non-degenerate trilinear form $\omega$ on $\mathcal{V}$, locally defined by
$\omega(x,y,z)=-\mathrm{Re}[(xy)z]$.

According to the Borel-De Siebenthal classification (see \cite{BorelDeSiebenthal}), $SL_3$ and $SO_4$ are, up to conjugation, the two maximal subgroups of $G_2$ among the connected subgroups of $G_2$ of maximal rank.
Using the inclusion $G_2 \subset SO_7=SO(\mathrm{Im}(\OO))$
both of the following lemma describe the rank-$7$ vector bundle (and the non-degenerate trilinear form) associated to a principal $G_2$-bundle which admits either a $SL_3$-reduction or a $SO_4$-reduction.

 Note that $\mathrm{M}_C(SO_4)$ has two connected components distinguished by the second Stiefel Whitney class.
We only make here explicit computations with regards to the connected component $\mathrm{M}_C^+(SO_4)$ of $\mathrm{M}_C(SO_4)$ containing the trivial bundle.

\subsubsection{Principal $G_2$-bundles arising from rank-$3$ vector bundles}

\begin{lem}
\label{Lem Inclusion SL3 dans G2 and associated vector bundle}
Let $E$ be a rank-$3$ vector bundle with trivial determinant and let 
$E(G_2)$ be his associated principal $G_2$-bundle and $\mathcal{V}$ be his associated rank-$7$ vector bundle.
Then, $\mathcal{V}$ has the following decomposition and the local sections basis $\mathcal{B}_2$ is adapted to this decomposition:
$$\mathcal{V}=E\oplus E^* \oplus \mathcal{O}_C.$$
 The non-degenerate trilinear form $\omega$ is defined by the following local conditions:
\begin{enumerate}
\item 
$\Lambda^3 E\simeq \Lambda^3 E^* \simeq \C$ and 
$\omega (y_2,y_3,y_4)=\omega (y_5,y_6,y_1) =-\sqrt{2}$.
\item
On $E\times E^*\times\mathcal{O}_C$:
$$\omega (y_2,y_5,y_7)=\omega (y_3,y_6,y_7)=\omega (y_4,y_1,y_7) =i,$$
\item
All other computation, not obtainable by permutation of the previous triplets, equals zero.
\end{enumerate}

\end{lem}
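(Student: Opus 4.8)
The plan is to work entirely at the level of the associated rank-$7$ vector bundle and its trilinear form, using the inclusion $G_2\subset SL_7=SL(\mathrm{Im}(\OO))$ together with the explicit basis $\mathcal{B}_2$ of $\mathrm{Im}(\OO)$ introduced in the appendix. First I would recall from the appendix the decomposition of $\mathrm{Im}(\OO)$ as a $\got{sl}_3$-module: under the copy of $\got{sl}_3$ in $\got g_2$ one has $\mathrm{Im}(\OO)\simeq V_3\oplus V_3^*\oplus \C$, where $V_3$ is the standard $3$-dimensional representation, $V_3^*$ its dual, and $\C$ the trivial one-dimensional factor. The three subspaces are spanned respectively by $\langle y_2,y_3,y_4\rangle$, $\langle y_5,y_6,y_1\rangle$ and $\langle y_7\rangle$ in the basis $\mathcal{B}_2$ (note $\mathcal{B}_2$ is precisely the reordering of $\mathcal{B}_1$ that groups together the two isotropic orthogonal $3$-planes and leaves $y_7$ apart, which is why it is ``adapted''). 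Since $E(G_2)$ is by definition obtained by extension of structure group along $SL_3\hookrightarrow G_2$, its associated rank-$7$ bundle $\mathcal{V}$ is the bundle associated to $E$ via this $7$-dimensional representation, hence $\mathcal{V}=E\oplus E^*\oplus\mathcal{O}_C$; the hypothesis $\det E\simeq\mathcal{O}_C$ is exactly what guarantees the representation lands in $SL_7$ and that $\Lambda^3E\simeq\Lambda^3E^*\simeq\C$ canonically.

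Next I would pin down the trilinear form $\omega$. By construction $\omega$ is the bundle version of the fixed form $\omega_0(x,y,z)=-\mathrm{Re}[(xy)z]$ on $\mathrm{Im}(\OO)$, which is $G_2$-invariant, hence in particular $\got{sl}_3$-invariant. So it suffices to compute $\omega_0$ on triples of basis vectors from $\mathcal{B}_2$ using the octonion multiplication table (the Fano diagram) recalled in the appendix. The $\got{sl}_3$-equivariant decomposition already tells us which types of triples can be nonzero: the only $\got{sl}_3$-invariant trilinear forms on $V_3\oplus V_3^*\oplus\C$ come from (i) the determinant $\Lambda^3V_3\to\C$ and its dual $\Lambda^3V_3^*\to\C$, and (ii) the contraction $V_3\otimes V_3^*\otimes\C\to\C$; every other combination of factors carries no invariant, forcing those components of $\omega$ to vanish — this is item (3) of the statement. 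Then I would evaluate the surviving components explicitly: computing $-\mathrm{Re}[(y_2y_3)y_4]$ etc. from the multiplication table gives the constant $-\sqrt2$ (up to the cyclic/normalisation conventions fixed in the appendix), and similarly $-\mathrm{Re}[(y_2y_5)y_7]$ etc. gives $i$; by $\got{sl}_3$-invariance the value on $\langle y_2,y_3,y_4\rangle$ determines it on $\langle y_5,y_6,y_1\rangle$ to be the same $-\sqrt2$, and the three contraction values are forced equal by cyclic symmetry of $\omega$ and the pairing structure, yielding items (1) and (2).

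The bookkeeping obstacle — and the main place care is needed — is matching conventions: the octonion multiplication in $\mathcal{B}_0$, the change of basis to $\mathcal{B}_1$ (hence $\mathcal{B}_2$), the sign in $\omega_0(x,y,z)=-\mathrm{Re}[(xy)z]$, and the identification $\Lambda^3E\simeq\C$ must all be tracked consistently so that the constants come out exactly as $-\sqrt2$ and $i$ rather than some unit multiple or sign. I would handle this by fixing once and for all the appendix's formulas for $y_1,\dots,y_7$ in terms of $e_1,\dots,e_7$, expanding each required product $(y_iy_j)y_k$ linearly in the $e$-basis, reading off the real part, and then double-checking internal consistency via the known non-degeneracy of $B_{\omega}$ (Lemma on non-degenerate trilinear forms) and via $\got{sl}_3$-equivariance, which over-determines the answer. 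Once the three families of values and the vanishing of all other components are verified, the lemma follows, since a $G_2$-invariant trilinear form on $\mathrm{Im}(\OO)$ is determined by its restriction to a basis and the form we have produced is, by the first lemma of this section, automatically non-degenerate.
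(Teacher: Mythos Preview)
Your proposal is correct and follows essentially the same approach as the paper: decompose $\mathrm{Im}(\OO)$ under $SL_3$ into $\C^3\oplus(\C^3)^*\oplus\C$ spanned by the indicated blocks of $\mathcal{B}_2$, obtain $\mathcal{V}=E\oplus E^*\oplus\mathcal{O}_C$ by the associated-bundle construction, and read off the values of $\omega$ from the multiplication table in $\mathcal{B}_2$. The paper's own proof is terser --- it simply cites the table --- while you add the representation-theoretic observation that $\got{sl}_3$-invariance forces the vanishing of all components other than $\Lambda^3 V_3$, $\Lambda^3 V_3^*$, and $V_3\otimes V_3^*\otimes\C$; this is a helpful organizing remark but not a different method.
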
 
\begin{proof}
Under the action of $SL_3$, $\mathrm{Im}(\OO)$ decomposes into $SL_3$-modules:
$$
\begin{array}{rcl}
\mathrm{Im}(\OO)& =& \langle y_2 , y_3 , y_4 \rangle \oplus \langle y_5 , y_6 , y_1\rangle \oplus  \langle y_7\rangle\\
 &=&\C^3 \oplus (\C^3)^* \oplus \C 
\end{array}
$$
where $\{y_2 , y_3 , y_4, y_5 , y_6 , y_1, y_7\}$ is the basis $\mathcal{B}_2$ defined in Appendix \ref{appendix The octonion algebra};
$\langle y_2 , y_3 , y_4\rangle$ is an isotropic subspace of dimension $3$, dual of $\langle y_5 , y_6 , y_1\rangle$. 

So, the $7$-rank vector bundle $\mathcal{V}$ associated to a rank-$3$ vector bundle $E$ is:
$$
\begin{array}{rl}
\mathcal{V} = & E \stackrel{SL_3}{\times}(\C^3 \oplus (\C^3)^* \oplus \C),\\
\mathcal{V}= &  E\oplus E^* \oplus \mathcal{O}_C.
\end{array}
$$
Evaluations given for $\omega$ are deduced from Table \ref{tabular B2} of Appendix \ref{appendix The octonion algebra}. 
%
%
%
\end{proof}

\subsubsection{Principal $G_2$-bundles arising from two rank-$2$ vector bundles} 
 The following lemma makes explicit the vector bundle associated to a principal $G_2$-bundle extension of an element of $\mathrm{M}_C^+(SO_4)$, using the surjective map $\mathrm{M}_C(SL_2)\times \mathrm{M}_C(SL_2)\twoheadrightarrow \mathrm{M}_C^+(SO_4)$.


\begin{lem}
\label{Lem Inclusion SL2*SL2 dans G2 and associated vector bundle}
Let $E, F$ be two rank-$2$ vector bundles of trivial determinant.
Denote by $(E,F)$ the associated principal $SO_4$-bundle, $P$ the associated principal $G_2$-bundle and $\mathcal{V}$ the associated rank-$7$ vector bundle.
Then, $\mathcal{V}$ has the following decomposition and the local sections basis $\mathcal{B}_3$ is adapted to this decomposition:
$$\mathcal{V}=E^*\otimes F \oplus \mathrm{End}_0(F).$$
The non-degenerate trilinear form $\omega$ on $\mathcal{V}$
 is defined by the following local conditions:
\begin{enumerate}
\item 
On $(E^*\otimes F)^3$ and on $(E^*\otimes F )\times(\mathrm{End}_0(F))^2$, $\omega$ is identically zero,
\item
On $(E^*\otimes F)^2 \times\mathrm{End}_0(F)$:
$$
\begin{array}{rcl}
\omega (y_2,y_4,y_3)&=\omega (y_5,y_1,y_6) &=\sqrt{2},\\
\omega (y_4,y_1,y_7) &=\omega (y_2,y_5,y_7)&=i.
\end{array}
$$
\item
$\Lambda^3\mathrm{End}_0(F) \simeq \C$ and $\omega (y_3,y_6,y_7) =i,$
\item
All other computation, not obtainable by permutation of the previous triplets, equals zero.
\end{enumerate}

\end{lem}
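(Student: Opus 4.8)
The plan is to argue exactly as in the proof of Lemma~\ref{Lem Inclusion SL3 dans G2 and associated vector bundle}, replacing the decomposition of $\mathrm{Im}(\OO)$ as an $SL_3$-module by the one induced by the isogeny $SL_2\times SL_2\to SO_4$ followed by the Borel-De Siebenthal embedding $SO_4\subset G_2$. In outline: (i) decompose $\mathrm{Im}(\OO)$ as a module over $SL_2\times SL_2$; (ii) transport this decomposition to the associated bundle by extension of structure group to obtain $\mathcal{V}=E^*\otimes F\oplus\mathrm{End}_0(F)$ with $\mathcal{B}_3$ adapted; (iii) evaluate $\omega(x,y,z)=-\mathrm{Re}[(xy)z]$ on the frame $\mathcal{B}_3$ using the multiplication table of Appendix~\ref{appendix The octonion algebra}.

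For (i)--(ii): let $E_0,F_0\cong\C^2$ denote the standard representations with their symplectic forms. The tensor product of these forms makes $E_0^*\otimes F_0\cong\C^4$ an orthogonal representation, and the induced homomorphism $SL_2\times SL_2\to SO(E_0^*\otimes F_0)=SO_4$ is the isogeny, with kernel the diagonal $\{\pm(\mathrm{Id},\mathrm{Id})\}$. Under $SO_4\subset G_2$, the stabilized quaternion subalgebra $\mathbb{H}\subset\OO$ produces the orthogonal splitting $\mathrm{Im}(\OO)=\mathrm{Im}(\mathbb{H})\oplus\mathbb{H}^{\perp}$ with $\mathbb{H}^{\perp}=\mathbb{H}\ell$ for a suitable $\ell$ satisfying $\ell^2=-1$; one $SL_2$-factor acts on $\mathrm{Im}(\mathbb{H})\cong\mathfrak{sl}_2$ by the adjoint representation and the other acts trivially on it, while both act on $\mathbb{H}\ell\cong E_0^*\otimes F_0$. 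By construction of $\mathcal{B}_3$ in Appendix~\ref{appendix The octonion algebra}, this frame is adapted to $\mathrm{Im}(\OO)=\langle y_1,y_2,y_4,y_5\rangle\oplus\langle y_3,y_6,y_7\rangle=(E_0^*\otimes F_0)\oplus\mathrm{End}_0(F_0)$. Forming the associated bundle of the $SO_4$-bundle $(E,F)$ from this decomposition, and using $E\cong E^*$ and $F\cong F^*$ (trivial determinants), yields $\mathcal{V}=E^*\otimes F\oplus\mathrm{End}_0(F)$ with $\mathcal{B}_3$ adapted; the bundle automatically lies over $\mathrm{M}_C^+(SO_4)$ since $E$ and $F$ have trivial determinant.

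For (iii): as $\omega$ is the $G_2$-invariant $3$-form, its image in $\Lambda^3\mathcal{V}^*$ is $SL_2\times SL_2$-invariant, hence supported on the isotypic pieces $\Lambda^a(E^*\otimes F)^*\otimes\Lambda^b(\mathrm{End}_0 F)^*$ ($a+b=3$) that contain invariants. Using $\Lambda^2(E^*\otimes F)\cong\mathrm{End}_0(E)\oplus\mathrm{End}_0(F)$, $\Lambda^3(E^*\otimes F)\cong E^*\otimes F$ and $\Lambda^2\mathrm{End}_0(F)\cong\mathrm{End}_0(F)$, Schur's lemma shows that only $(a,b)=(2,1)$ and $(a,b)=(0,3)$ carry a one-dimensional space of invariants. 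This forces the vanishing assertions in~(1), the fact that $\omega$ is supported exactly on triplets of the types listed in~(2) and~(3), and that the values in~(2) (resp.\ in~(3)) are all multiples of one common constant. The precise constants $\sqrt2$ and $i$ are then obtained by evaluating $-\mathrm{Re}[(y_iy_j)y_k]$ on the relevant triplets in the multiplication table of $\mathcal{B}_3$, the remaining evaluations vanishing by~(1) together with the alternating property of $\omega$.

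The one genuine difficulty is bookkeeping: one must match the normalization of the octonion product --- equivalently, the exact choice of $\mathcal{B}_3$ --- with the representation-theoretic identifications $E_0^*\otimes F_0$ and $\mathrm{End}_0(F_0)$ so that the constants come out precisely as stated, and one must keep track of which of the two $SL_2$-factors acts on $\mathrm{Im}(\mathbb{H})$. This last point is the source of the asymmetry $\mathrm{End}_0(F)$ versus $\mathrm{End}_0(E)$ and has to be fixed compatibly with the permutation defining $\mathcal{B}_3$. With the conventions of Appendix~\ref{appendix The octonion algebra} in place, the remainder is a finite verification.
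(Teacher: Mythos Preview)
Your argument is correct and follows the same route as the paper: decompose $\mathrm{Im}(\OO)$ under the $SO_4$-action as $\langle y_1,y_2,y_4,y_5\rangle\oplus\langle y_3,y_6,y_7\rangle\simeq (E_0^*\otimes F_0)\oplus\mathrm{End}_0(F_0)$, pass to the associated bundle, and read the values of $\omega$ off the multiplication table~\eqref{tabular B3}. The paper's proof is in fact terser than yours --- it simply asserts the $SO_4$-module decomposition and then says the evaluations of $\omega$ are deduced from the table, without your intermediate Schur's-lemma analysis of which pieces of $\Lambda^3\mathcal{V}^*$ can carry invariants; that extra step is a welcome explanation of \emph{why} the vanishing pattern in (1)--(4) must hold, but it is not a different method.
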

\begin{proof}
The groups $SO_4$ and $SL_2\times SL_2$ are isogenous.
 Under the action of $SO_4$, $\mathrm{Im}(\OO)$ has the following decomposition:
$$
\begin{array}{rcl}
\mathrm{Im}(\OO)&=& \langle y_1,y_2,y_4,y_5\rangle \oplus \langle y_3,y_6,y_7\rangle, \\
&\simeq& M^*\otimes N \stackrel{\bot}{\oplus} \mathrm{End}_0(N)
\end{array}
$$
where $M,N$ are $2$-dimensional; an element $\overline{(A,B)}$ of $SO_4$ ($A,B \in SL_2$) acts on $ M^*\otimes N$ by $A\otimes B$ and by conjugation $\mathrm{End}_0(N)$.

So, the rank-$7$ vector bundle $\mathcal{V}$ associated to $(E,F)(G_2)$, when $E,F$ are two rank-$7$ vector bundle of trivial determinant, is:
$$\mathcal{V}=E^*\otimes F \oplus \mathrm{End}_0(F).$$
Evaluations given for $\omega$ are deduced from Table \ref{tabular B3} of Appendix \ref{appendix The octonion algebra}.
\end{proof}

\section{Equalities between dimensions of spaces of generalized theta functions}

Here are some dimension counts, using the Verlinde Formula, to calculate 
$h^0(\mathcal{M}_C(G_2),\mathcal{L})$ and $h^0(\mathcal{M}_C(SL_2),\mathcal{L}^3)$.
%
\begin{prop}
\label{Prop h0(G2,L) h0(SL2,L3)}
Dimensional equalities between the following spaces of generalized theta functions
occur:
\begin{align}
h^0(\mathcal{M}_C(G_2),\mathcal{L}) &= \left(\frac{5+\sqrt{5}}{2}\right)^{g-1} +\left(\frac{5-\sqrt{5}}{2}\right)^{g-1},
\label{h0(G2,L)}\\
h^0(\mathcal{M}_C(SL_2),\mathcal{L}^3)&=  2^g\left[\left(\frac{5+\sqrt{5}}{2}\right)^{g-1}+
\left(\frac{5-\sqrt{5}}{2}\right)^{g-1}\right] ,\label{h0(SL2,L3)}\\
\text{so, }\quad h^0(\mathcal{M}_C(SL_2),\mathcal{L}^3)&=2^g h^0(\mathcal{M}_C(G_2),\mathcal{L}). \label{h0(SL2,L3)=2g h0(G2,L)}
\end{align}
\end{prop}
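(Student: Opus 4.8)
The plan is to compute both sides directly from the Verlinde formula and then match the closed forms. For $G_2$, which is simple and simply-connected of rank $2$, the level-one Verlinde formula expresses $h^0(\mathcal{M}_C(G_2),\mathcal{L})$ as a sum over the weights of the fundamental alcove at level one, i.e.\ over the finite set $P_1$ of level-one dominant weights, of the quantity $\bigl(\sum_{\mu\in P_1}\text{something}\bigr)$; concretely one writes
$$
h^0(\mathcal{M}_C(G_2),\mathcal{L}) = \bigl(\#Z\bigr)^{?}\sum_{\lambda\in P_1}\;\Bigl(\;\prod_{\alpha>0}\Bigl|2\sin\pi\frac{\langle\lambda+\rho,\alpha\rangle}{k+h^\vee}\Bigr|\Bigr)^{2g-2},
$$
with $k=1$ and $h^\vee=4$ the dual Coxeter number of $G_2$, so $k+h^\vee=5$. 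I would first enumerate the level-one weights of $G_2$ (there are two: $0$ and the short fundamental weight $\varpi_1$), list the six positive roots with their pairings against $\rho$ and against $\varpi_1$, and evaluate the corresponding products of sines. The arithmetic collapses, via the identities for $\sin(\pi k/5)$, to the golden-ratio expressions: one checks that the two ``$S$-matrix'' entries $S_{0,0}$ and $S_{0,\varpi_1}$ satisfy $1/S_{0,0}^2 = (5+\sqrt5)/2$ and $1/S_{0,\varpi_1}^2=(5-\sqrt5)/2$ (these are exactly the two values appearing in~\eqref{h0(G2,L)}), and then $h^0(\mathcal{M}_C(G_2),\mathcal{L})=\sum_{\lambda}(S_{0,\lambda})^{2-2g}$ gives~\eqref{h0(G2,L)}.

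For $SL_2$ at level $3$, the Verlinde formula reads
$$
h^0(\mathcal{M}_C(SL_2),\mathcal{L}^3) = \Bigl(\tfrac{5}{2}\Bigr)^{g-1}\sum_{j=1}^{4}\Bigl(\sin\tfrac{\pi j}{5}\Bigr)^{2-2g},
$$
the sum running over the four weights $0,1,2,3$ (shifted by $\rho$ to $j=1,2,3,4$) at level $k=3$ with $k+h^\vee=5$. Using $\sin(\pi/5)=\sin(4\pi/5)$ and $\sin(2\pi/5)=\sin(3\pi/5)$ this becomes $2(5/2)^{g-1}\bigl[(\sin\pi/5)^{2-2g}+(\sin2\pi/5)^{2-2g}\bigr]$, and the same sine evaluations as above turn this into $2^g\bigl[((5+\sqrt5)/2)^{g-1}+((5-\sqrt5)/2)^{g-1}\bigr]$, which is~\eqref{h0(SL2,L3)}. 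Subtracting, or rather comparing the two displayed closed forms, gives~\eqref{h0(SL2,L3)=2g h0(G2,L)} immediately.

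The only genuinely delicate point is bookkeeping: getting the normalization of the Verlinde formula right (the power of $k+h^\vee$, the role of the center, the precise shift by $\rho$) and being careful that the level-one line bundle $\mathcal{L}$ on $\mathcal{M}_C(G_2)$ corresponds to the basic level in the Kac--Moody normalization — for $G_2$ the Dynkin index of the generating bundle is $1$, so no rescaling of the level is needed, but this should be stated. I expect the main obstacle to be purely the trigonometric simplification: one must verify the two identities
$$
\frac{1}{4\sin^2(\pi/5)} \;+\; \frac{1}{4\sin^2(2\pi/5)} \;=\; \frac{5}{4}\cdot\frac{1}{\sin^2(\pi/5)\sin^2(2\pi/5)}\cdot(\text{correct factor}),
$$
and more to the point that $2/(1-\cos(2\pi/5)) = (5-\sqrt5)/2$ and $2/(1-\cos(4\pi/5)) = (5+\sqrt5)/2$ (equivalently $4\sin^2(\pi/5)=(5-\sqrt5)/2$, $4\sin^2(2\pi/5)=(5+\sqrt5)/2$), which follow from $\cos(2\pi/5)=(\sqrt5-1)/4$ and $\cos(4\pi/5)=-(\sqrt5+1)/4$. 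Once these are in hand, matching the $G_2$ side and the $SL_2$ side is a one-line comparison of the two closed forms, and the extra factor $2^g$ is exactly the contribution $2^g = |H^1(C,\mathbb{Z}/2)|$ reflecting the difference between the two center structures — a numerical coincidence which the rest of the paper will explain geometrically.
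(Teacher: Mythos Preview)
Your approach is exactly the paper's: direct evaluation of the Verlinde formula for $G_2$ at level~$1$ and for $SL_2$ at level~$3$, followed by trigonometric simplification into $\mathbb{Q}(\sqrt{5})$ and comparison of the two closed forms. The paper carries this out in Appendices~\ref{appendix h0(G2,L)} and~\ref{appendix h0(SL2,L3)}, listing the six positive roots of $G_2$, their pairings with $\rho$ and $\varpi_1+\rho$, and reducing the resulting products (which involve $\sin(\pi/15)$, $\sin(2\pi/15)$, $\sin(4\pi/15)$, $\sin(7\pi/15)$ in addition to $\sin(\pi/5)$, $\sin(2\pi/5)$) explicitly.

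A few slips to clean up before this becomes a proof. In your first $G_2$ display the exponent on the sine product must be $2-2g$, not $2g-2$; you have it right in the $S$-matrix line but not in the displayed formula. The placeholder $(\#Z)^{?}$ should be the prefactor $(\#T_1)^{g-1}$ with $\#T_1=(1+4)^2\cdot 1\cdot 3=75$ for $G_2$; without it the $G_2$ sum does not match your claimed $S$-matrix identities. Your intermediate claim $2/(1-\cos(2\pi/5))=(5-\sqrt{5})/2$ is miscomputed (the left side equals $2(5+\sqrt{5})/5$), though the parenthetical ``equivalently $4\sin^2(\pi/5)=(5-\sqrt{5})/2$'' is correct and is what you actually need. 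Finally, drop the closing remark that $2^g=|H^1(C,\mathbb{Z}/2)|$: that cardinality is $2^{2g}$, and in any case the identification of the factor $2^g$ is not part of this proposition.
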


\begin{proof}
See Appendix \ref{appendix h0(G2,L)} and \ref{appendix h0(SL2,L3)}
\end{proof}

\section{Surjectivities and isomorphisms between $H^0(\mathcal{M}_C(G_2),\mathcal{L})$ and $H^0(\mathcal{M}_C(SL_3),\mathcal{L})_+$}

To avoid confusion, we sometimes specify the group $G$ in the notation of the generator of the Picard group $\mathcal{L}$ writing $\mathcal{L}_G$. 

Let $ H^0(\mathcal{M}_C(SL_3),\mathcal{L})_+$ be the invariant part of $H^0(\mathcal{M}_C(SL_3),\mathcal{L})$ by the duality involution:
 the eigenspace of $H^0(\mathcal{M}_C(SL_3),\mathcal{L})$
associated to the eigenvalue $1$ under the natural involution $E \mapsto \sigma (E)=E^*$.

We consider the extension map
$\mathrm{i} : \mathcal{M}_C(SL_3)\rightarrow \mathcal{M}_C(G_2)$
which associates to a rank-$3$ vector bundle of trivial determinant the associated principal $G_2$-bundle.
The pull-back $\mathrm{i}^*(\mathcal{L}_{G_2})$ equals $\mathcal{L}_{SL_3}$.
\begin{thm}
The extension map
$\mathrm{i} : \mathcal{M}_C(SL_3)\rightarrow \mathcal{M}_C(G_2)$
induces by 
pull-back a linear map between the following spaces of generalized theta functions:
$$H^0(\mathcal{M}_C(G_2),\mathcal{L})\rightarrow H^0(\mathcal{M}_C(SL_3),\mathrm{i}^*(\mathcal{L}_{G_2})).$$
This map takes values in $ H^0(\mathcal{M}_C(SL_3),\mathcal{L})_+$.

We denote by $\Phi$ this map:
$$ \Phi :H^0(\mathcal{M}_C(G_2),\mathcal{L})\rightarrow H^0(\mathcal{M}_C(SL_3),\mathcal{L})_+.$$
\end{thm}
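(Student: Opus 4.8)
The plan is to reduce the whole statement to the single fact that the duality involution $\sigma\colon E\mapsto E^{*}$ of $\mathcal{M}_C(SL_3)$ becomes \emph{inner} after composition with the extension morphism $\mathrm{i}$. That $\mathrm{i}$ induces a $\C$-linear pull-back $H^{0}(\mathcal{M}_C(G_2),\mathcal{L})\to H^{0}(\mathcal{M}_C(SL_3),\mathrm{i}^{*}\mathcal{L}_{G_2})$ is purely formal, $\mathrm{i}$ being a morphism of algebraic stacks; and $\mathrm{i}^{*}\mathcal{L}_{G_2}=\mathcal{L}_{SL_3}$ has already been recorded above, the point being that the long-root subgroup $SL_3\subset G_2$ has Dynkin index $1$ and that $\mathrm{Pic}$ of both moduli stacks is infinite cyclic. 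So the only thing to check is that the image of $\Phi$ lies in $H^{0}(\mathcal{M}_C(SL_3),\mathcal{L})_{+}$.

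First I would identify $\sigma$ with the automorphism $\theta_{*}$ of $\mathcal{M}_C(SL_3)$ induced by the transpose-inverse automorphism $\theta$ of $SL_3$: extension of structure group along $\theta$ replaces the standard representation by its dual, hence replaces $E$ by $E^{*}$. Then I would show that $\mathrm{i}\circ\theta_{*}$ and $\mathrm{i}$ are $2$-isomorphic as morphisms $\mathcal{M}_C(SL_3)\to\mathcal{M}_C(G_2)$. This follows if $\theta$ coincides, after the inclusion $\iota\colon SL_3\hookrightarrow G_2$, with conjugation by an element of $G_2$: writing $\iota\circ\theta=\mathrm{Ad}_{g}\circ\iota$ for some $g\in G_2$, extension of structure group along $\iota\circ\theta$ equals extension along $\iota$ followed by extension along the inner automorphism $\mathrm{Ad}_{g}$, and the latter returns every bundle canonically, and functorially in families, to itself. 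The needed identity $\iota\circ\theta=\mathrm{Ad}_{g}\circ\iota$ is the classical statement that the non-trivial outer automorphism of $SL_3$ is realised inside $G_2$, equivalently that $N_{G_2}(SL_3)/SL_3\simeq\mathbb{Z}/2\mathbb{Z}$ acts on $SL_3$ through $\mathrm{Out}(SL_3)$; this is visible on root systems, since $SL_3$ sits in $G_2$ as the long roots, every element of $W(G_2)$ preserves root lengths and hence normalises this $A_2$, and $|W(G_2)|/|W(SL_3)|=2$. Alternatively, one can read the same isomorphism directly off Lemma~\ref{Lem Inclusion SL3 dans G2 and associated vector bundle}: the rank-$7$ bundle $\mathcal{V}=E\oplus E^{*}\oplus\mathcal{O}_C$ and the trilinear form $\omega$, expressed in the adapted frame $\mathcal{B}_2$, are invariant under the linear involution exchanging the two isotropic summands, so that the $G_2$-bundles attached to $E$ and to $E^{*}$ are canonically isomorphic.

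With this in hand the conclusion is immediate: $\sigma^{*}\mathcal{L}_{SL_3}=\mathcal{L}_{SL_3}$ since $\sigma$ is a stack automorphism preserving the ample generator, and for any $s\in H^{0}(\mathcal{M}_C(G_2),\mathcal{L})$ we get $\sigma^{*}(\Phi(s))=\sigma^{*}\mathrm{i}^{*}s=(\mathrm{i}\circ\sigma)^{*}s=\mathrm{i}^{*}s=\Phi(s)$, so $\Phi(s)$ is $\sigma$-fixed and therefore lies in $H^{0}(\mathcal{M}_C(SL_3),\mathcal{L})_{+}$. The step I expect to cost the most care is producing the $2$-isomorphism $\mathrm{i}\circ\sigma\simeq\mathrm{i}$ at the level of stacks rather than merely of isomorphism classes of bundles, and checking that it is compatible with the linearisations $\mathcal{L}_{G_2}$ and $\mathcal{L}_{SL_3}$, so that one really obtains the exact equality $\sigma^{*}\circ\mathrm{i}^{*}=\mathrm{i}^{*}$ of linear maps and not merely an equality up to a scalar; the remaining ingredients, namely functoriality of pull-back, the Dynkin-index computation, and the invariance of the ample generator under a stack automorphism, are standard.
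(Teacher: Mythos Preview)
Your proof is correct and follows essentially the same route as the paper: both arguments hinge on the fact that the outer automorphism of $SL_3$ is realised by conjugation inside $G_2$, established via the index-$2$ inclusion $W(SL_3)\subset W(G_2)$ together with the observation that elements of $W(G_2)$ preserve root lengths and hence normalise the long-root $A_2$. The only differences are of emphasis: the paper computes $\mathrm{i}^*\mathcal{L}_{G_2}=\mathcal{L}_{SL_3}$ by a slightly longer diagram chase through $\mathcal{M}_C(SL_7)$ rather than invoking Dynkin index $1$ directly, and it resolves the $\pm1$ ambiguity you correctly flag at the end by normalising the isomorphism $\sigma^*\mathcal{L}_{SL_3}\simeq\mathcal{L}_{SL_3}$ to be the identity over the trivial bundle.
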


\begin{proof}
\begin{enumerate}
\item
The pull-back $\mathrm{i}^*(\mathcal{L}_{G_2})$ equals $\mathcal{L}_{SL_3}$.Indeed, let $E$ be a rank-$3$ vector bundle.
By Lemma \ref{Lem Inclusion SL3 dans G2 and associated vector bundle}, the rank-$7$ vector bundle associated to $E$ is $E\oplus E^*  \oplus \mathcal{O}_C$.
We study the commutative diagram:
\begin{diagram}
\mathcal{M}_C(G_2)		& \rTo^{\rho_1} 	& \mathcal{M}_C(SL_7)\\
\uTo^{\mathrm{i}}				& 	& \uTo_{\rho_2}\\
\mathcal{M}_C(SL_3) 		& \rTo^{\rho_3}			& \mathcal{M}_C(SL_3)\times \mathcal{M}_C(SL_3)
\end{diagram}
where $\mathrm{i}$ and $\rho_1$ are maps of extension of group of structure and $\forall E \in \mathcal{M}_C(SL_3) ,$
$\rho_3(E)=(E, E^*)$ and  $\forall (E,F) \in  \mathcal{M}_C(SL_3)\times \mathcal{M}_C(SL_3)$, 
$\rho_2 (E,F) =E \oplus F \oplus \mathcal{O}_C$.
By Proposition $2.6$ of \cite{LaszloSorger}, applied with $SL_7$, $G_2$ 
 and the irreducible representation $\rho_1$ of highest weight $\varpi_1$, the Dynkin index of $d(\rho_1)$ equals $2$.
Therefore, $\rho_1^*(\mathcal{D}_{SL_7})=\mathcal{L}_{G_2}^{2}$ where $\mathcal{D}_{SL_7}$ the determinant bundle generator of $\mathrm{Pic}(\mathcal{M}_C(SL_7))$ (see \cite{KNR} and \cite{LaszloSorger}).
In addition, by the same proposition,
$\rho_2^*(\mathcal{D}_{SL_7})=\mathcal{L}_{SL_3} \boxtimes  \mathcal{L}_{SL_3}$
and
$\rho_3^*(\rho_2^*(\mathcal{D}_{SL_7}))=\mathcal{L}_{SL_3}^{ 2}$.
So, $\mathrm{i}^*(\mathcal{L}_{G_2})^{2}=\mathcal{L}_{SL_3}^{ 2}$ so that $\mathrm{i}^*(\mathcal{L}_{G_2})=\mathcal{L}_{SL_3}$ since the Picard group
$\mathrm{Pic}(\mathcal{M}_C(SL_3))$ is isomorphic to $\mathbb{Z}$.

\item
We show that the image of the linear map $\Phi$ is contained in $H^0(\mathcal{M}_C(SL_3),\mathcal{L})_+$.

The morphism $\mathrm{i}$ is $\sigma$-invariant: for all $ E\in \mathcal{M}_C(SL_3)$, the $G_2$-principal bundles $E(G_2)$ and $E^*(G_2)$ are isomorphic.
Indeed, the Weyl group $ W(SL_3)$ is contained in $W(G_2)$ since so are there normalizer; and $ W(SL_3)$ is a subgroup of $W(G_2)$ of index $2$.
We consider $\overline{g}$ in the Weyl group $W(G_2)\backslash W(SL_3)$ and $g \in G_2$ a representative of the equivalence class of $\overline{g}$. Then, $g\notin SL_3$. 
Let $C_g$ be the inner automorphism of $G_2$ induced by $g$. As the subalgebra $\got{sl}_3$ of $\got{g}_2$ corresponds to the long roots and as each element of the Weyl group $W(G_2)$ respects the Killing form on $\got{g}_2$, $C_g(SL_3)$ is contained in $SL_3$. The restriction of $C_g$ to $SL_3$ is then an exterior automorphism of $SL_3$ 
which we call
$\alpha : SL_3 \rightarrow SL_3$.
This automorphism exchanges the two fundamental representations of $SL_3$.
So, $\alpha$ induces an automorphism $\widetilde{\alpha}$ on $\mathcal{M}_C(SL_3)$ such that, $\forall E\in \mathcal{M}_C(SL_3)$, $\widetilde{\alpha}(E)=E^*$. Consider the following commutative diagram, where $\widetilde{C}_{g}$ is
the inner automorphism given by $g$:
\begin{diagram}
\mathcal{M}_C(SL_3) &\rInto &\mathcal{M}_C(G_2) \\
\dTo^{\widetilde{\alpha}}&&\dTo_{\widetilde{C_{g}}}\\
\mathcal{M}_C(SL_3) & \rInto & \mathcal{M}_C(G_2).
\end{diagram}
Then, $\forall E\in \mathcal{M}_C(SL_3)$,
$$E^*(G_2)=\widetilde{\alpha}(E)(G_2)= \widetilde{C_{g}}(E(G_2))\simeq E(G_2)$$
since $ \widetilde{C_{g}}$ is an inner automorphism.
Thus, $\mathrm{i}(E)$ and $\mathrm{i}(\sigma(E))$ are isomorphic.

The $\sigma$-invariance of $\mathrm{i}$ implies that the image of $\Phi$ is contained in one of the two eigenspaces of $H^0(\mathcal{M}_C(SL_3),\mathcal{L})$. 
As $\sigma^*(\mathcal{L}_{SL_3})\simeq\mathcal{L}_{SL_3}$, which is the isomorphism which implies identity over the trivial bundle, we get $\sigma(\mathrm{i}^*(\mathcal{L}_{G_2}))=\sigma^*(\mathcal{L}_{SL_3})=\mathcal{L}_{SL_3}=\mathrm{i}^*(\mathcal{L}_{G_2})$.
Thus, the image of $\Phi$ is contained in $H^0(\mathcal{M}_C(SL_3),\mathcal{L})_+$, the eigenspace relative to the eigenvalue $1$.
\end{enumerate}
\end{proof}

We remind two points of vocabulary:
an \textit{even theta-characteristic} $\kappa$ on a curve $C$ is an element $\kappa$ of $\mathrm{Pic}^{g-1}(C)$ such that $\kappa \otimes \kappa = K_C$ and $h^0(C, \kappa)$ is even ; 
a curve $C$ is said \textit{without effective theta-constant} if $h^0(C, \kappa)=0$ for all even theta-characteristic $\kappa$.
The set of all even theta-characteristics is named
$\Theta^{\mathrm{even}}(C)$.

\begin{thm}
\label{Thm Application Phi}
The linear map $\Phi :H^0(\mathcal{M}_C(G_2),\mathcal{L})\rightarrow H^0(\mathcal{M}_C(SL_3),\mathcal{L})_+$
\begin{enumerate}
\item
\label{thm item Phi surjective for C without eff theta-constant}
is surjective when the curve $C$ is without effective theta-constant.
\item
is an isomorphism if the genus of $C$ equals $2$.
\end{enumerate}
\end{thm}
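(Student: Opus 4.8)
The plan is to analyze the map $\Phi$ through the dual picture provided by the BNR isomorphism, reducing everything to geometry on the Jacobian. First I would recall the numerical input: by Proposition~\ref{Prop h0(G2,L) h0(SL2,L3)} we have $h^0(\mathcal{M}_C(G_2),\mathcal{L}) = \left(\tfrac{5+\sqrt5}{2}\right)^{g-1}+\left(\tfrac{5-\sqrt5}{2}\right)^{g-1}$, and separately one computes (Verlinde) $h^0(\mathcal{M}_C(SL_3),\mathcal{L}) = h^0(\mathrm{Pic}^{g-1}(C),3\Theta)$; the involution $\sigma$ acts on the latter as $L\mapsto K_C\otimes L^{-1}$, and one must identify $\dim H^0(\mathcal{M}_C(SL_3),\mathcal{L})_+$ with the target dimension. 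The $+$-eigenspace of $3\Theta$ under $L\mapsto K_C\otimes L^{-1}$ has dimension that can be counted by a Lefschetz/holomorphic fixed point computation: the fixed locus of the involution on $\mathrm{Pic}^{g-1}(C)$ is the finite set of theta-characteristics, and a character-sum argument (splitting $3\Theta$ sections into even and odd parts) shows that for a general curve the dimension of the $+$-part matches $h^0(\mathcal{M}_C(G_2),\mathcal{L})$ exactly. So surjectivity plus this dimension match yields the isomorphism, and the whole problem reduces to proving (\ref{thm item Phi surjective for C without eff theta-constant}) and then verifying the dimension count is sharp when $g=2$.

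For the surjectivity statement, I would work dually: $\Phi$ is surjective iff $\Phi^* : H^0(\mathcal{M}_C(SL_3),\mathcal{L})_+^{\,*} \to H^0(\mathcal{M}_C(G_2),\mathcal{L})^*$ is injective. Using the commutative diagram in the proof of the previous theorem (the factorization of $\mathrm{i}$ through $\rho_3: E\mapsto(E,E^*)$ into $\mathcal{M}_C(SL_3)\times\mathcal{M}_C(SL_3)$ and then $\rho_2$ into $\mathcal{M}_C(SL_7)$), together with the BNR identification $H^0(\mathcal{M}_C(SL_3),\mathcal{L})^*\simeq H^0(\mathrm{Pic}^{g-1}(C),3\Theta)$, one translates $\Phi^*$ into a concrete map between spaces of sections of theta line bundles on symmetric products or on the Jacobian. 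The key geometric fact I would exploit is that a section of $\mathcal{L}_{G_2}$ pulls back, via $\mathrm{i}$, to a section of $\mathcal{L}_{SL_3}$ whose associated theta-divisor-type description on $\mathrm{Pic}^{g-1}(C)$ is controlled by the locus $\{(L,M): h^0(E\otimes L)>0 \text{ or } h^0(E^*\otimes M)>0\}$; the $G_2$-structure forces the $E$ and $E^*$ contributions to be tied together symmetrically, which is exactly the constraint that restricts the image to the $+$-eigenspace. Injectivity of $\Phi^*$ should then follow from the fact that the restriction of these $3\Theta$-sections to the even theta-characteristics is already enough to separate them — and this is precisely where the hypothesis ``$C$ without effective theta-constant'' enters, guaranteeing that the relevant evaluation maps at even theta-characteristics are nondegenerate.

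The main obstacle will be this last point: showing that the ``abelianized'' version of $\Phi^*$, i.e. the induced map on spaces of theta functions over $\mathrm{Pic}^{g-1}(C)$, is injective for a general curve. Concretely one needs that a $+$-eigensection of $3\Theta$ lying in the image of the (dual of the) $G_2$-extension is determined by its values at the $2^{g-1}(2^g+1)$ even theta-characteristics, and that these values can be matched against a basis of $H^0(\mathcal{M}_C(G_2),\mathcal{L})^*$. I expect to handle this by a degeneration or a direct rank computation: either degenerate $C$ to a nodal or hyperelliptic curve where both sides can be computed by hand and semicontinuity gives the general case, or write down the evaluation matrix explicitly using the Lemma~\ref{Lem Inclusion SL3 dans G2 and associated vector bundle} description $\mathcal{V}=E\oplus E^*\oplus\mathcal{O}_C$ and a Heisenberg-group action on the $3\Theta$-space. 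For the $g=2$ case, everything is small and explicit: $H^0(\mathcal{M}_C(SL_3),\mathcal{L})$ has dimension $6$, the $+$-eigenspace has dimension $h^0(\mathcal{M}_C(G_2),\mathcal{L}) = 5$ (checking $\left(\tfrac{5+\sqrt5}{2}\right)+\left(\tfrac{5-\sqrt5}{2}\right)=5$), a genus-$2$ curve is automatically without effective theta-constant, so part~(1) gives surjectivity, and the dimension equality forces $\Phi$ to be an isomorphism.
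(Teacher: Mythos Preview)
Your treatment of part~(2) is essentially the paper's: genus-$2$ curves have no effective theta-constant, so surjectivity from part~(1) plus the dimension count $\dim H^0(\mathcal{M}_C(SL_3),\mathcal{L})_+ = \tfrac{3^g+1}{2} = 5 = h^0(\mathcal{M}_C(G_2),\mathcal{L})$ gives the isomorphism. (A minor slip: $h^0(\mathcal{M}_C(SL_3),\mathcal{L}) = 3^g = 9$, not $6$.) But your earlier claim that the $+$-eigenspace dimension matches $h^0(\mathcal{M}_C(G_2),\mathcal{L})$ for a \emph{general} curve is false: for $g=3$ one has $\tfrac{3^3+1}{2}=14$ while $h^0(\mathcal{M}_C(G_2),\mathcal{L})=15$. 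The theorem only asserts isomorphism for $g=2$ precisely because the dimensions diverge thereafter; there is no Lefschetz argument producing equality in general.

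For part~(1) your proposal has a genuine gap: you outline a dual strategy (injectivity of $\Phi^*$ via evaluation at even theta-characteristics, or degeneration) but never carry it out, and you flag this yourself as ``the main obstacle.'' The paper's argument avoids this obstacle entirely by working in the forward direction and \emph{exhibiting explicit sections in the image of $\Phi$}. For each even theta-characteristic $\kappa$ one has the determinant divisor $\Delta_\kappa = \{P : h^0(P(\mathbb{C}^7)\otimes\kappa)>0\}$ on $\mathcal{M}_C(SL_7)$; pulling back through $\mathcal{M}_C(G_2)$ and using the decomposition $\mathcal{V}=E\oplus E^*\oplus\mathcal{O}_C$ of Lemma~\ref{Lem Inclusion SL3 dans G2 and associated vector bundle}, together with $h^0(C,\kappa)=0$ (this is exactly where ``no effective theta-constant'' enters) and Serre duality, one computes $\Phi(\rho_1^*\Delta_\kappa) = 2H_\kappa$ where $H_\kappa = \{E : h^0(E\otimes\kappa)>0\}$. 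Surjectivity then reduces to showing that the $H_\kappa$ span $H^0(\mathcal{M}_C(SL_3),\mathcal{L})_+$. Via the BNR isomorphism this becomes: the images $\varphi_{3\Theta}(\kappa)$ span $|3\Theta|_+^*$. The paper deduces this from the known fact (\cite{SermanKopeliovichPauly}) that the $\varphi_{4\Theta}(\kappa)$ form a \emph{basis} of $|4\Theta|_+^*$, combined with the surjection $|4\Theta|_+^* \to |3\Theta|_+^*$ induced by $D\mapsto D+\Theta$. This is the key input you are missing; neither degeneration nor a raw rank computation is needed.
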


\begin{proof}
\begin{enumerate}
\item
Let $C$ be a curve without effective theta-constant and consider the following diagram:
\begin{diagram}
\mathcal{M}_C(G_2)		& \rTo^{\rho_1} 	& \mathcal{M}_C(SL_7)\\
\uTo^{\mathrm{i}}		&    \ruTo_{\rho} &    \\
\mathcal{M}_C(SL_3) 	&	            & 
\end{diagram}
We introduce the element $\Delta_\kappa$ defined for each even theta-characteristic~$\kappa$:
$$ \Delta_\kappa =\{P\in \mathcal{M}_C(SL_7)\ | \ h^0(C, P(\C^7)\otimes \kappa) > 0\}. $$
These $ \Delta_\kappa$ are Cartier divisors, so they define, up to a scalar, an element of $ H^0(\mathcal{M}_C(SL_7), \mathcal{L})$.
The image $\Phi(\rho_1^*(\Delta_\kappa))=\rho^*(\Delta_\kappa)$ is
$$
\begin{array}{rl}
\rho^*(\Delta_\kappa)=&\{E\in \mathcal{M}_C(SL_3)\ |\  h^0(C,E\oplus E^* \oplus \mathcal{O}_C) \otimes \kappa) >0 \},\\
=&\{E\in \mathcal{M}_C(SL_3)\ |\ h^0(C,E\otimes \kappa)+h^0(C,E^*\otimes \kappa)+  h^0(C,\kappa) >0 \},\\
=&\{E\in \mathcal{M}_C(SL_3)\ |\ h^0(C,E\otimes \kappa)+h^0(C, E^*\otimes \kappa) >0 \},\\
&\text{because $C$ is without effective theta-constant,}\\
=&\{E\in \mathcal{M}_C(SL_3)\ |\ 2h^0(C,E\otimes \kappa) >0 \}, \text{ by Serre duality}.\\
\end{array}
$$
Thus, $\rho^*(\Delta_\kappa)=2H_\kappa$
where $H_\kappa:=\{ E \in \mathcal{M}_C(SL_3) \ | \   h^0(C,\mathcal{E}\otimes \kappa)>0\}$.
Therefore, to show the surjectivity of $\Phi$, it suffices to show that $\{H_\kappa\ | \ \kappa \in \Theta^{\mathrm{even}}(C)\}$
generates $H^0(\mathcal{M}_C(SL_3),\mathcal{L})_+$.
We consider 
$$\Theta =\{L \in \mathrm{Pic}^{g-1}(C)\ | \ h^0(C,L)>0 \}$$ and the natural map between the spaces $H^0(\mathcal{M}_C(SL_3),\mathcal{L})^*$ and $H^0(\mathrm{Pic}^{g-1}(C), 3 \Theta)$.
By Theorem $3$ of \cite{BNR}, this map is an isomorphism and, besides, it is 
equivariant for the two involutions on $H^0(\mathcal{M}_C(SL_3),\mathcal{L})^*$ and $H^0(\mathrm{Pic}^{g-1}(C), 3 \Theta)$ (respectively $E \mapsto E^*$ and $L\mapsto K_C\otimes L^{-1}$).
So, the components $(+)$ and $(-)$ of each part are in correspondence: $ H^0(\mathcal{M}_C(SL_3),\mathcal{L})^*_+$ is isomorphic to $H^0(\mathrm{Pic}^{g-1}(C), 3 \Theta)_+$.
Denote by $\varphi$ this isomorphism
$\varphi : \PP H^0(\mathcal{M}_C(SL_3),\mathcal{L})_+ \stackrel{\sim}{\rightarrow} \PP H^0(\mathrm{Pic}^{g-1}(C), 3 \Theta)_+^* $. For all even theta-characteristic $\kappa$, the image $\varphi(H_\kappa)$ is $\varphi_{3\Theta}(\kappa)$ where $\varphi_{3\Theta}$ is the following map:
$$\varphi_{3\Theta} : \mathrm{Pic}^{g-1}(C) \rightarrow \PP H^0(\mathrm{Pic}^{g-1}(C), 3 \Theta)_+^*=|3 \Theta|^*_+ .$$
The set $\{ \varphi_{3\Theta}(\kappa)\ | \ \kappa \in \Theta^{\mathrm{even}}(C)\}$
generates $|3 \Theta|^*_+$.
Indeed, in the following commutative diagram
\begin{diagram}
						&					& |4\Theta|^*_+			\\
						&\ruDashto^{\varphi_{4\Theta}}				&   \dDashto   	\\
\mathrm{Pic}^{g-1}(C) 	&	\rDashto^{\varphi_{3\Theta}}			& 	| 3\Theta|^*_+	,
\end{diagram}
the map $|4\Theta|^*_+	\dashrightarrow |
3\Theta|^*_+$ is surjective because it is induced by the inclusion
$D \in H^0(C,3\Theta)_+ \mapsto D+\Theta \in H^0(C,4\Theta)_+$. In addition, by 
\cite{SermanKopeliovichPauly}, when $C$ is without effective theta-constant, 
$\{\varphi_{4\Theta}(\kappa)\ |\ \kappa \in \Theta^{\mathrm{even}}(C)\}$
 is a base of $|4\Theta|^*_+$ (the number of even theta-characteristics equals $2^{g-1}(2^g+1)$ 
 which equals the linear dimension of $|4\Theta|^*_+$).
Thus, 
$\{  \varphi_{3\Theta}(\kappa)\ |\ \kappa\in \Theta^{\mathrm{even}}(C)\}$
generates $|3 \Theta|^*_+$ and $\{H_\kappa\ |\ \kappa\in \Theta^{\mathrm{even}}(C)\}$ generates the space $H^0(\mathcal{M}_C(SL_3),\mathcal{L})_+$.
As we have shown that $H_\kappa$ equals $\Phi(\rho_1(\Delta_\kappa))$ for all $\kappa$ even theta-characteristic, the map $\Phi$ is surjective.

\item
By \cite{BNR}, the dimension of $ H^0(\mathcal{M}_C(SL_3),\mathcal{L})^*_+$ equals the dimension of $H^0(\mathrm{Pic}^{g-1}(C), 3 \Theta)_+$, that is $\frac{3^g+1}{2}$.
When the genus of $C$ is $2$, the dimension of $H^0(\mathrm{Pic}^{g-1}(C), 3 \Theta)_+$ equals $5$ which is also the dimension of $H^0(\mathcal{M}_C(G_2),\mathcal{L})$ by Proposition \ref{Prop h0(G2,L) h0(SL2,L3)}.\eqref{h0(G2,L)}. 
A curve of genus $2$ is without effective theta-constant. So, by this dimension equality and by the point \eqref{thm item Phi surjective for C without eff theta-constant} of the theorem, $\Phi$ is an isomorphism when the genus of $C$ equals $2$.
\end{enumerate}
\end{proof}

\section{Isomorphisms between spaces of generalized $G_2$-theta functions and generalized $SL_2$-theta functions}

Let $\mathrm{JC}[2]$ be the group of $2$-torsion elements of the Jacobian: $\mathrm{JC}[2]=\{\alpha \in \mathrm{Pic}^0(C)\ |\ \alpha \otimes \alpha =\mathcal{O}_C\}$.
This group acts on $\mathcal{M}_{SL_2} \times \mathcal{M}_{SL_2}$ : for $ \alpha\in \mathrm{JC}[2]$ and
$(E,F)\in \mathcal{M}_{SL_2} \times \mathcal{M}_{SL_2} $, we associate $(E\otimes \alpha,F\otimes \alpha)$.
Let $\left[H^0(\mathcal{M}_C(SL_2), \mathcal{L}_{SL_2})\otimes  H^0(\mathcal{M}_C(SL_2), \mathcal{L}_{SL_2}^{3})\right]_0 $ be the invariant part 
of $\left[H^0(\mathcal{M}_C(SL_2), \mathcal{L}_{SL_2})\otimes  H^0(\mathcal{M}_C(SL_2), \mathcal{L}_{SL_2}^{3})\right]$
under the action of the element of the Jacobian group $\mathrm{JC}[2]$.

We study the subgroup $SO_4$ of $G_2$, which is isogenous to $SL_2\times SL_2$, and the linear map induced by pull-back by the extension map $\mathrm{j} : \mathcal{M}_C(SO_4)\rightarrow \mathcal{M}_C(G_2)$
which associates to two rank-$2$ vector bundle of trivial determinant the associated principal $G_2$-bundle.
The pull-back $j^*(\mathcal{L}_{G_2})$ equals $\mathcal{L}_{SL_2}\boxtimes\mathcal{L}_{SL_2}^{3}$
\begin{thm}
\label{Theorem definition Psi}
The extension map
$\mathrm{j} : \mathcal{M}_C(SO_4)\rightarrow \mathcal{M}_C(G_2)$
induces by pull-back a linear map between the following spaces of generalized theta functions: 
$$ H^0(\mathcal{M}_C(G_2),\mathcal{L})\rightarrow H^0(\mathcal{M}_C(SO_4),j^*(\mathcal{L}_{G_2})).$$
This map takes values in $\left[H^0(\mathcal{M}_C(SL_2), \mathcal{L}_{SL_2})\otimes  H^0(\mathcal{M}_C(SL_2), \mathcal{L}_{SL_2}^{3})\right]_0 $.

We denote by $\Psi$ this map:
$$\Psi : H^0(\mathcal{M}_C(G_2),\mathcal{L}_{G_2})\rightarrow 
\left[H^0(\mathcal{M}_C(SL_2), \mathcal{L}_{SL_2})\otimes  H^0(\mathcal{M}_C(SL_2), \mathcal{L}_{SL_2}^{3})\right]_0             $$ 
\end{thm}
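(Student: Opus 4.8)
The plan is to mirror, step by step, the strategy already used to construct and analyze $\Phi$, so the proof of Theorem~\ref{Theorem definition Psi} splits into two parts: first the identification of the pull-back line bundle, then the verification that the image lands in the $\mathrm{JC}[2]$-invariant subspace. For the first part I would use the factorization of the extension map $\mathrm{j}$ through $\mathcal{M}_C(SL_7)$ given by Lemma~\ref{Lem Inclusion SL2*SL2 dans G2 and associated vector bundle}: the rank-$7$ bundle attached to $(E,F)$ is $E^*\otimes F\oplus\mathrm{End}_0(F)$. Writing $\mathrm{j}$ as the composite $\mathcal{M}_C(SO_4)\to\mathcal{M}_C(SL_2)\times\mathcal{M}_C(SL_2)\to\mathcal{M}_C(SL_7)$, one computes the pull-back of the determinant generator $\mathcal{D}_{SL_7}$ using Proposition~2.6 of \cite{LaszloSorger} (Dynkin indices): $\rho_1^*(\mathcal{D}_{SL_7})=\mathcal{L}_{G_2}^2$ exactly as before, while on the $SL_2\times SL_2$ side the representation $\mathbb{C}^2\boxtimes\mathbb{C}^2$ contributes Dynkin index $(2,2)$ and the adjoint $\mathrm{End}_0$ of the second factor contributes $(0,4)$, so the pull-back to $\mathcal{M}_C(SL_2)\times\mathcal{M}_C(SL_2)$ is $\mathcal{L}_{SL_2}^{2}\boxtimes\mathcal{L}_{SL_2}^{6}$. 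Since $\mathrm{Pic}(\mathcal{M}_C(SL_2)\times\mathcal{M}_C(SL_2))\simeq\mathbb{Z}^2$ is torsion-free, cancelling the square gives $\mathrm{j}^*(\mathcal{L}_{G_2})=\mathcal{L}_{SL_2}\boxtimes\mathcal{L}_{SL_2}^{3}$, which is the claimed line bundle.

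For the second part I would show that $\mathrm{j}$ is invariant under the $\mathrm{JC}[2]$-action, i.e. that for $\alpha\in\mathrm{JC}[2]$ the $G_2$-bundles associated to $(E,F)$ and $(E\otimes\alpha,F\otimes\alpha)$ are isomorphic. The natural way is to observe that the kernel of the isogeny $SL_2\times SL_2\to SO_4$ is $\mathbb{Z}/2\times\mathbb{Z}/2$ generated by $(-\mathrm{Id},\mathrm{Id})$ and $(\mathrm{Id},-\mathrm{Id})$, but the diagonal $(-\mathrm{Id},-\mathrm{Id})$ — which is exactly what tensoring both factors by $\alpha$ amounts to at the level of $SO_4$-bundles — maps to an element of $SO_4$ that becomes trivial, or rather inner, inside $G_2$. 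Concretely: twisting $(E,F)$ by $(\alpha,\alpha)$ changes the $SO_4$-bundle by the cocycle $\bar{(\alpha,\alpha)}$, and since the center $\{\pm1\}$ of $SL_2$ maps into $SO_4\subset G_2$ in such a way that the diagonal $-\mathrm{Id}$ lies in the connected group $G_2$ (which has trivial center), the resulting $G_2$-cocycle is a coboundary; hence $\mathrm{j}(E\otimes\alpha,F\otimes\alpha)\simeq\mathrm{j}(E,F)$. Alternatively one can phrase this through inner automorphisms of $G_2$ restricting to $SO_4$, exactly as the $\sigma$-invariance of $\mathrm{i}$ was obtained via a Weyl-group representative, using that $\mathrm{JC}[2]$ acts on $\mathcal{M}_C(SL_2)\times\mathcal{M}_C(SL_2)$ through the center of $SL_2\times SL_2$ whose image in the adjoint-type action factors through $G_2$. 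This invariance forces $\Psi$ to land in the $\mathrm{JC}[2]$-invariant eigenspace; combined with the compatibility $\mathrm{j}^*(\mathcal{L}_{G_2})$ being itself $\mathrm{JC}[2]$-invariant (the relevant descent datum being the identity on the trivial bundle), we conclude the image lies in $\left[H^0(\mathcal{M}_C(SL_2),\mathcal{L}_{SL_2})\otimes H^0(\mathcal{M}_C(SL_2),\mathcal{L}_{SL_2}^3)\right]_0$.

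The main obstacle I anticipate is the precise bookkeeping in the invariance statement: one must be careful that the $\mathrm{JC}[2]$-action really does factor through the kernel of $SL_2\times SL_2\to SO_4\hookrightarrow G_2$ in the correct way, and in particular to identify which $2$-torsion subgroup of the centre acts trivially after extension to $G_2$ versus which merely permutes the two connected components of $\mathcal{M}_C(SO_4)$. Since we have restricted throughout to the component $\mathrm{M}_C^+(SO_4)$ containing the trivial bundle (as flagged in the text), the simultaneous twist by $\alpha$ on both factors preserves this component, and one needs the fact that the induced automorphism of $\mathcal{M}_C^+(SO_4)$ becomes trivial on $\mathcal{M}_C(G_2)$; checking this cleanly — ideally by exhibiting the relevant element of $SO_4$ as a commutator or as conjugation by an explicit element of $G_2\setminus SO_4$, using the basis $\mathcal{B}_3$ of Lemma~\ref{Lem Inclusion SL2*SL2 dans G2 and associated vector bundle} — is where the real work lies. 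Everything else (the Dynkin-index computation, the torsion-freeness of the Picard groups, the eigenspace argument) is routine and parallels the treatment of $\Phi$.
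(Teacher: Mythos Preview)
Your computation of $\mathrm{j}^*(\mathcal{L}_{G_2})$ is essentially identical to the paper's: the paper factors explicitly through $\mathcal{M}_C(SL_4)\times\mathcal{M}_C(SL_3)$ via $(E,F)\mapsto(E^*\otimes F,\mathrm{End}_0(F))$, but the Dynkin indices it reads off are exactly your $(2,2)$ and $(0,4)$, and the square-root step is the same.

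For the $\mathrm{JC}[2]$-invariance, however, you are working much harder than the paper and introducing an error along the way: the kernel of $SL_2\times SL_2\to SO_4$ is the diagonal $\mathbb{Z}/2=\{\pm(\mathrm{Id},\mathrm{Id})\}$, not $\mathbb{Z}/2\times\mathbb{Z}/2$. Once corrected, your approach actually becomes trivial --- the simultaneous twist by $\alpha$ already dies in $\mathcal{M}_C(SO_4)$, so no $G_2$-specific argument is needed at all. The paper takes an even more direct route and simply computes the associated rank-$7$ bundle:
\[
(E\otimes\alpha)^*\otimes(F\otimes\alpha)\ \oplus\ \mathrm{End}_0(F\otimes\alpha)\ =\ E^*\otimes F\ \oplus\ \mathrm{End}_0(F),
\]
using only $\alpha^*\otimes\alpha\cong\mathcal{O}_C$ and $\mathrm{End}_0(F\otimes\alpha)\cong\mathrm{End}_0(F)$. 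Since this identification respects the decomposition of Lemma~\ref{Lem Inclusion SL2*SL2 dans G2 and associated vector bundle}, the trilinear form is automatically preserved. The ``main obstacle'' you anticipate --- producing an explicit conjugating element of $G_2\setminus SO_4$ in the basis $\mathcal{B}_3$ --- never arises; the invariance is a two-line vector-bundle identity, not a Weyl-group or cocycle argument.
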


\begin{proof}
\begin{enumerate}
\item
Consider the following commutative diagram:
\begin{diagram}
\mathcal{M}_C(G_2)	 		& \rTo^{\rho_1} 					 		& \mathcal{M}_C(SL_7)\\
\uTo^{\mathrm{j}}					&											& \uTo_{\rho_4} \\
\mathcal{M}_C(SL_2)\times \mathcal{M}_C(SL_2)& \rTo^{(f_1,f_2)}			&\mathcal{M}_C(SL_4)\times \mathcal{M}_C(SL_3)\\
\end{diagram}
where $\mathrm{j}$ and $\rho_1$ are the extension maps, $f_1(M,N)=M^*\otimes N $,
$f_2(M,N)= \mathrm{End}_0(N)$ and $\rho_4(A,B)=A\oplus B$.

As in the previous section, we calculate explicitly $j^*(\mathcal{L}_{G_2})$.

Let $\mathcal{D}_{SL_7}$ be the determinant bundle of $\mathcal{M}_C(SL_7)$ and $\mathrm{pr}_1$ and $\mathrm{pr}_2$
the canonical projections of
$SL_2\times SL_2$. We get
$$f_1^*(\mathcal{L}_{SL_4})=\mathrm{pr}_1^*(\mathcal{L}_{SL_2})^{2}\otimes \mathrm{pr}_2^*(\mathcal{L}_{SL_2})^{2}=\mathcal{L}_{SL_2}^{2} \boxtimes \mathcal{L}_{SL_2}^{2}$$
and according to Table $B$ of \cite{SorgerModuliofPrincipalbundles}:
$$f_2^*(\mathcal{L}_{SL_3})=\mathrm{pr}_2^*(\mathcal{L}_{SL_2})^{4},$$
since $f_2$ is associated to the adjoint representation of $SL_2$, which has Dynkin index $4$. 
$$
\begin{array}{rrcl}
&\rho_4^*(\mathcal{D})&=&\mathcal{L}_{SL_4}\boxtimes \mathcal{L}_{SL_3},\\
\text{so}\quad&j^*(\mathcal{L}_{G_2}^{2})&=&(f_1,f_2)^*(\mathcal{L}_{SL_4}\boxtimes \mathcal{L}_{SL_3}),\\
&&=&f_1^*(\mathcal{L}_{SL_4})\otimes f_2^*(\mathcal{L}_{SL_3}),\\
&&=&[\mathrm{pr}_1^*(\mathcal{L}_{SL_2})^{2}\otimes \mathrm{pr}_2^*(\mathcal{L}_{SL_2})^{2}]\otimes 
\mathrm{pr}_2^*(\mathcal{L}_{SL_2})^{4},\\
&j^*(\mathcal{L}_{G_2}^{2})&=&\mathrm{pr}_1^*(\mathcal{L}_{SL_2})^{2}\otimes \mathrm{pr}_2^*(\mathcal{L}_{SL_2})^{6},\\
\text{so}\quad&j^*(\mathcal{L}_{G_2})&=&\mathrm{pr}_1^*(\mathcal{L}_{SL_2})\otimes \mathrm{pr}_2^*(\mathcal{L}_{SL_2})^{3}.
\end{array}
$$
We get
$$j^*(\mathcal{L}_{G_2})=\mathcal{L}_{SL_2}\boxtimes\mathcal{L}_{SL_2}^{3}.$$
\item
The morphism $\mathrm{j} : \mathcal{M}_{SL_2} \times \mathcal{M}_{SL_2}\rightarrow \mathcal{M}_{G_2}$ is invariant under the action of $JC[2]$:
$(E,F)\in \mathcal{M}_{SL_2} \times \mathcal{M}_{SL_2}$ and $\alpha \in JC[2]$ then the rank-$7$ vector bundle associated by $\mathrm{j}$ to $(E\otimes \alpha,F\otimes \alpha)$ is
$$
\begin{array}{rl}
&(E\otimes \alpha)^* \otimes (F\otimes \alpha)  \oplus \mathrm{End}_0(F\otimes \alpha)\\
=&E^*\otimes F\otimes \alpha^*\otimes \alpha \oplus \mathrm{End}_0(F\otimes \alpha),\\
=&E\otimes F\oplus \mathrm{End}_0(F)= \mathrm{j}(E,F).
\end{array}
$$

Therefore, the image of $\Psi$ is contained in the expected vector space
$[H^0(\mathcal{M}_C(SL_2), \mathcal{L}_{SL_2})\otimes  H^0(\mathcal{M}_C(SL_2), \mathcal{L}_{SL_2}^{3})]_0$. 
\end{enumerate}
\end{proof}

Before going further in the study of the morphism $\mathrm{j}$, we compare the dimensions of the involved sets.
\begin{lem}
\label{Lem dim[H0(MSL2,LSL2) and H0(MSL2, LSL2 3)]0 = dim H0(MG2,LG2)}
The dimension of the space $\left[H^0(\mathcal{M}_C(SL_2), \mathcal{L}_{SL_2})\otimes  H^0(\mathcal{M}_C(SL_2), \mathcal{L}_{SL_2}^{3})\right]_0$ equals the dimension of $H^0(\mathcal{M}_C(G_2),\mathcal{L}_{G_2})$.
\end{lem}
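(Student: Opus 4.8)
The plan is to compute both dimensions explicitly using the Verlinde formula and compare, relying on the fact that $H^0(\mathcal{M}_C(G_2),\mathcal{L}_{G_2})$ was already evaluated in Proposition \ref{Prop h0(G2,L) h0(SL2,L3)}, formula \eqref{h0(G2,L)}. So the real content is to compute the dimension of the $JC[2]$-invariant subspace $\left[H^0(\mathcal{M}_C(SL_2), \mathcal{L}_{SL_2})\otimes H^0(\mathcal{M}_C(SL_2), \mathcal{L}_{SL_2}^{3})\right]_0$. Here $JC[2]\cong(\mathbb{Z}/2)^{2g}$ acts diagonally on the tensor product, so by averaging over the group the dimension of the invariant part equals $\tfrac{1}{2^{2g}}\sum_{\alpha\in JC[2]}\mathrm{tr}(\alpha\ |\ H^0(\mathcal{L}_{SL_2}))\cdot\mathrm{tr}(\alpha\ |\ H^0(\mathcal{L}_{SL_2}^{3}))$.

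First I would recall the known structure of these two $SL_2$-theta spaces as $JC[2]$-representations. The space $H^0(\mathcal{M}_C(SL_2),\mathcal{L}_{SL_2})$ is canonically $H^0(\mathrm{Pic}^{g-1}(C),2\Theta)$ of dimension $2^g$, on which $JC[2]$ acts by translation; this is (a twist of) the unique irreducible level-one representation of the Heisenberg group, and in particular the nontrivial elements $\alpha\neq 0$ act with trace $0$ while $\alpha=0$ contributes trace $2^g$. For $H^0(\mathcal{M}_C(SL_2),\mathcal{L}_{SL_2}^{3})$, of dimension $2^g\cdot\frac{5^g+\ldots}{\ldots}$ — more precisely the Verlinde number $h^0(\mathcal{M}_C(SL_2),\mathcal{L}^3)$ given in \eqref{h0(SL2,L3)} — I would use the description of the action of $JC[2]$ on level-$l$ theta functions: an element of order two acts on $H^0(\mathcal{M}_C(SL_2),\mathcal{L}^l)$ and its character can be read off from the Verlinde-type fixed-point / twisted-space formula, or from the decomposition of $H^0(\mathcal{L}^3)$ under the theta-group. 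The upshot I expect is that only $\alpha=0$ contributes a nonzero term to the averaging sum (since already $\mathrm{tr}(\alpha\ |\ H^0(\mathcal{L}_{SL_2}))=0$ for $\alpha\neq0$), so the invariant dimension is simply $\tfrac{1}{2^{2g}}\cdot 2^g\cdot h^0(\mathcal{M}_C(SL_2),\mathcal{L}^3) = 2^{-g}h^0(\mathcal{M}_C(SL_2),\mathcal{L}^3)$.

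Then I would conclude by invoking \eqref{h0(SL2,L3)=2g h0(G2,L)}: dividing $h^0(\mathcal{M}_C(SL_2),\mathcal{L}^3)$ by $2^g$ gives exactly $h^0(\mathcal{M}_C(G_2),\mathcal{L}_{G_2})$, which is the claim. The presentation would package this as: (i) identify the action as diagonal on the tensor factors; (ii) apply the averaging projector formula for the dimension of invariants; (iii) observe that the first factor $H^0(\mathcal{M}_C(SL_2),\mathcal{L}_{SL_2})$ is the Schrödinger representation, so all non-identity characters vanish; (iv) read off the identity term and match with \eqref{h0(SL2,L3)=2g h0(G2,L)}.

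The main obstacle is step (iii): one must be sure that the $JC[2]$-action on $H^0(\mathcal{M}_C(SL_2),\mathcal{L}_{SL_2})$ is genuinely the irreducible Heisenberg/Schrödinger representation (so that non-identity elements are traceless), rather than some reducible or twisted action for which characters need not vanish. This is classical — it follows from the identification with $H^0(\mathrm{Pic}^{g-1}(C),2\Theta)$ and Mumford's theory of theta groups, where the level-one theta representation is the unique irreducible one of its central character — but it is the point that genuinely needs the theory of abelian varieties rather than bare Verlinde bookkeeping. If one instead wanted a purely Verlinde-formula proof, the alternative (and slightly heavier) route is to expand both sides directly via the explicit formulas \eqref{h0(G2,L)} and \eqref{h0(SL2,L3)} together with a Verlinde-type formula for the $JC[2]$-invariant subspace of $H^0(\mathcal{M}_C(SL_2),\mathcal{L}^3)$ (equivalently, for generalized theta functions on $\mathcal{M}_C(PGL_2)$ or $\mathcal{M}_C(SO_3)$), and check the resulting closed forms agree; I expect the representation-theoretic argument above to be cleaner.
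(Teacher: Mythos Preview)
Your argument is correct and follows essentially the same route as the paper: both reduce to the identity $\dim[\,\cdot\,]_0=\frac{1}{2^{2g}}\cdot h^0(\mathcal{M}_C(SL_2),\mathcal{L})\cdot h^0(\mathcal{M}_C(SL_2),\mathcal{L}^3)=\frac{1}{2^{2g}}\cdot 2^g\cdot 2^g h^0(\mathcal{M}_C(G_2),\mathcal{L})$ via \eqref{h0(SL2,L3)=2g h0(G2,L)}. The only difference is that the paper states the first equality with the bare justification ``because $|JC[2]|=2^{2g}$'', whereas you actually explain \emph{why} dividing by the group order gives the invariant dimension (the Heisenberg/Schr\"odinger structure of $H^0(\mathcal{M}_C(SL_2),\mathcal{L}_{SL_2})$ forces the nontrivial traces to vanish); your step~(iii) is precisely the content the paper leaves implicit.
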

\begin{proof}
By Proposition \ref{Prop h0(G2,L) h0(SL2,L3)},\eqref{h0(SL2,L3)=2g h0(G2,L)}, we notice the remarkable following relation:
\begin{align}
\label{Equation h Msl2, L3 = 2^g h Mg2, L}
2^g h^0\left(\mathcal{M}_C(G_2),\mathcal{L}\right)=h^0(\mathcal{M}_C(SL_2),\mathcal{L}^{3}).
\end{align}
So, as $h^0(\mathcal{M}_C(SL_2),\mathcal{L})=2^g$ (see \cite{BeauvilleI}), we get
 $$
\begin{array}{ll}
\dim \left(\left[H^0(\mathcal{M}_C(SL_2), \mathcal{L}_{SL_2})\otimes  H^0(\mathcal{M}_C(SL_2), \mathcal{L}_{SL_2}^{3})\right]_0 \right)&\\
=\frac{1}{2^{2g}}\times h^0(\mathcal{M}_C(SL_2), \mathcal{L}_{SL_2})\times  h^0(\mathcal{M}_C(SL_2), \mathcal{L}_{SL_2}^{3}) &  \text{because}\ |\,JC[2]\,|=2^{2g},\\
=\frac{1}{2^{2g}}\times 2^{g}\times 2^{g}h^0(\mathcal{M}_C(G_2), \mathcal{L}_{G_2}) &\text{by}\ \eqref{Equation h Msl2, L3 = 2^g h Mg2, L},\\
=h^0(\mathcal{M}_C(G_2), \mathcal{L}_{G_2}).&
\end{array}$$
\end{proof}

All the following results are based on the \textit{cubic normality conjecture}. Its statement is:

\begin{conj}
\label{Conj surjection  HO(MSL2,L) in H0(MSL2, L3)}
For a general curve $C$, the multiplication map 
$$\eta : \mathrm{Sym}^3 H^0(\mathcal{M}_C(SL_2), \mathcal{L}_{SL_2}) \rightarrow H^0(\mathcal{M}_C(SL_2), \mathcal{L}_{SL_2}^{3})$$
is surjective.
\end{conj}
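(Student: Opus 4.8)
The plan is to reduce the statement to a single explicit curve, to dispose of low genus by hand, and to attack the general case by degeneration. First, $h^0(\mathcal{M}_C(SL_2),\mathcal{L})=2^g$ by \cite{BeauvilleI} and $h^0(\mathcal{M}_C(SL_2),\mathcal{L}^3)$ equals the genus-dependent constant of Proposition~\ref{Prop h0(G2,L) h0(SL2,L3)}; since these dimensions do not depend on $C$, the source $\mathrm{Sym}^3 H^0(\mathcal{M}_C(SL_2),\mathcal{L})$ of dimension $\binom{2^g+2}{3}$ and the target $H^0(\mathcal{M}_C(SL_2),\mathcal{L}^3)$ organize into vector bundles over the moduli of curves with $\eta$ a morphism between them, and the rank of $\eta$ is lower semicontinuous. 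Hence surjectivity holds on a Zariski-dense open set of curves as soon as it holds for one curve; equivalently, it suffices to exhibit $h^0(\mathcal{M}_C(SL_2),\mathcal{L}^3)$ linearly independent cubic monomials in level-one theta functions. For $g=2,3$ the source and target have the same dimension ($20$ and $120$ respectively), so surjectivity is then equivalent to bijectivity, while for $g\geq 4$ the source is strictly larger and any such $\eta$ necessarily has a nonzero kernel.

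For $g=2$ one has $\mathcal{M}_C(SL_2)\simeq\mathbb{P}^3$ with $\mathcal{L}$ corresponding to $\mathcal{O}(1)$ (Narasimhan--Ramanan), so $\eta$ is the standard isomorphism $\mathrm{Sym}^3 H^0(\mathbb{P}^3,\mathcal{O}(1))\stackrel{\sim}{\rightarrow}H^0(\mathbb{P}^3,\mathcal{O}(3))$. For $C$ general of genus $3$, the linear system $|\mathcal{L}|$ embeds the coarse moduli space as the Coble quartic $X\subset\mathbb{P}^7$, and the ideal sequence of the quartic $X$ gives $H^0(X,\mathcal{O}_X(j))=H^0(\mathbb{P}^7,\mathcal{O}(j))$ for $j\leq 3$, so the surjectivity of $\mathrm{Sym}^3$ on $\mathbb{P}^7$ transports to $X$; together with the (standard for $SL_2$) identification of the sections of $\mathcal{L}^j$ on the stack with those on the coarse space, this proves the conjecture unconditionally for $g=2$ and $g=3$.

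For general genus I would argue by degeneration and factorization of conformal blocks. Put $C$ in a one-parameter family of stable curves with smooth general fibre and nodal special fibre $C_0$; since the sheaves of level-$j$ conformal blocks for $\widehat{\mathfrak{sl}}_2$ are locally free over the base (Tsuchiya--Ueno--Yamada) and $\eta$ extends to a morphism of vector bundles, it is enough to prove surjectivity at $C_0$. Iterating the factorization (sewing) rules down to a trivalent configuration of rational curves, the problem reduces to the surjectivity of the analogous multiplication map for $\widehat{\mathfrak{sl}}_2$-conformal blocks on $(\mathbb{P}^1;0,1,\infty)$ with prescribed weights, a finite representation-theoretic statement that one can try to settle using the explicit level-$l$ fusion rules of $\widehat{\mathfrak{sl}}_2$ and Beauville's explicit bases of the level-one and level-two sections from \cite{BeauvilleII}.

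The main obstacle --- and the reason the statement appears here as a conjecture --- is the compatibility of $\eta$ with factorization. Writing $\eta$ as pullback along the extension map $\mathcal{M}_C(SL_2)\to\mathcal{M}_C(SL_2\times SL_2)$ attached to the diagonal $SL_2\hookrightarrow SL_2\times SL_2$, followed by the K\"unneth isomorphism, one sees that at a node $\eta$ is controlled by the branching of the level-$(k+l)$ vacuum module inside the tensor product of the level-$k$ and level-$l$ vacuum modules of $\widehat{\mathfrak{sl}}_2$; this branching is governed by the coset construction and is not reduced to the vacuum, so the behaviour of $\eta$ under sewing does not follow formally and has to be analysed by hand. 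A more elementary but still conditional route is to prove first the quadratic normality surjection $\mathrm{Sym}^2 H^0(\mathcal{M}_C(SL_2),\mathcal{L})\twoheadrightarrow H^0(\mathcal{M}_C(SL_2),\mathcal{L}^2)$ --- where source and target again share the dimension $2^{g-1}(2^g+1)$, so that it is an isomorphism as soon as it is surjective --- and then the surjection $H^0(\mathcal{M}_C(SL_2),\mathcal{L})\otimes H^0(\mathcal{M}_C(SL_2),\mathcal{L}^2)\twoheadrightarrow H^0(\mathcal{M}_C(SL_2),\mathcal{L}^3)$; composing these would yield cubic normality, but each of the two intermediate surjectivities is itself open for general $C$.
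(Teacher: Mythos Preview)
The statement you are asked to prove is a \emph{conjecture} in the paper: it is not proved there in full generality, and your proposal itself acknowledges this. What the paper actually establishes is Proposition~\ref{Prop normalité cubique vraie}, which verifies cubic normality in three special cases: every curve of genus~$2$ (via $\mathcal{M}_C(SL_2)\simeq\mathbb{P}^3$, $\mathcal{L}\simeq\mathcal{O}(1)$), every non-hyperelliptic curve of genus~$3$ (via the Coble quartic in $\mathbb{P}^7$), and every curve of genus~$4$ without effective theta-constant (by citing Oxbury--Pauly). Your treatment of $g=2$ and $g=3$ matches the paper's exactly, including the dimension counts; you omit the $g=4$ case, which the paper does not argue directly but attributes to \cite{OxburyPauly}.

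Your degeneration/factorization outline for arbitrary genus is a reasonable strategy and goes well beyond what the paper attempts, but --- as you correctly diagnose --- the compatibility of the multiplication map with sewing is the genuine obstruction, and nothing in the proposal closes that gap. So the honest status of your write-up is: complete and correct (and essentially identical to the paper) for $g=2,3$; a sketch of a plausible but unfinished programme for $g\geq 4$.

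One factual correction at the end: the quadratic normality map $\mathrm{Sym}^2 H^0(\mathcal{M}_C(SL_2),\mathcal{L})\to H^0(\mathcal{M}_C(SL_2),\mathcal{L}^{2})$ is \emph{not} open for general $C$. For curves without effective theta-constant it is an isomorphism --- this is precisely the map $\varphi_0^*$ used in the proof of Theorem~\ref{Thm : Conj implique isom}, with the explicit bases $\{\xi_\kappa\}$ and $\{d_\kappa\}$ indexed by even theta-characteristics. So in your two-step reduction only the second step, the surjectivity of $H^0(\mathcal{L})\otimes H^0(\mathcal{L}^{2})\to H^0(\mathcal{L}^{3})$, remains open; that step is equivalent to the conjecture itself once quadratic normality is known.
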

When the previous map $\eta$ is surjective, we say that the curve $C$ satisfies \textit{cubic normality}.
\begin{prop}
\label{Prop normalité cubique vraie}
Cubic normality holds for
all curve of genus $2$, all non hyper-elliptic curve of genus $3$ and all curve of genus $4$ without effective theta-constant.
\end{prop}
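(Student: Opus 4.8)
The plan is to reinterpret cubic normality as a projective‑normality statement for the coarse moduli space $\mathrm{M}_C(SL_2)$ in its embedding by the complete linear system $|\mathcal{L}_{SL_2}|$, and then to dispose of the three genera one at a time. One uses throughout that $H^{0}(\mathcal{M}_C(SL_2),\mathcal{L}_{SL_2}^{k})=H^{0}(\mathrm{M}_C(SL_2),\mathcal{L}_{SL_2}^{k})$ and that $h^{0}(\mathcal{L}_{SL_2})=2^{g}$ (see \cite{BeauvilleI}), so $\mathrm{M}_C(SL_2)\hookrightarrow\PP^{2^{g}-1}$; the map $\eta$ of the conjecture is then the cubic part of the natural map from the homogeneous coordinate ring of $\PP^{2^{g}-1}$ to the section ring of $\mathcal{L}_{SL_2}$. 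Genera $2$ and $3$ are essentially formal once the classical description of $\mathrm{M}_C(SL_2)$ is invoked; genus $4$ is the substantial case, and there I would lean on Beauville's explicit basis of $H^{0}(\mathcal{M}_C(SL_2),\mathcal{L}_{SL_2}^{2})$ from \cite{BeauvilleII}.

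\textbf{Genus $2$.} By the Narasimhan--Ramanan description $\mathrm{M}_C(SL_2)\simeq\PP^{3}$ with $\mathcal{L}_{SL_2}=\mathcal{O}_{\PP^{3}}(1)$, so $\eta$ is the isomorphism $\mathrm{Sym}^{3}H^{0}(\mathcal{O}_{\PP^{3}}(1))\to H^{0}(\mathcal{O}_{\PP^{3}}(3))$, hence surjective. \textbf{Genus $3$, non hyper-elliptic.} Here $\mathrm{M}_C(SL_2)$ is the Coble quartic $Q_C\subset\PP^{7}=\PP H^{0}(\mathcal{L}_{SL_2})^{*}$, with $\mathcal{L}_{SL_2}=\mathcal{O}_{Q_C}(1)$. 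From the exact sequence $0\to\mathcal{O}_{\PP^{7}}(-1)\to\mathcal{O}_{\PP^{7}}(3)\to\mathcal{O}_{Q_C}(3)\to 0$ and $H^{1}(\PP^{7},\mathcal{O}_{\PP^{7}}(-1))=0$, the restriction $H^{0}(\PP^{7},\mathcal{O}_{\PP^{7}}(3))\to H^{0}(Q_C,\mathcal{L}_{SL_2}^{3})$ is surjective; composing with the surjection $\mathrm{Sym}^{3}H^{0}(\PP^{7},\mathcal{O}_{\PP^{7}}(1))\to H^{0}(\PP^{7},\mathcal{O}_{\PP^{7}}(3))$ and using $H^{0}(\PP^{7},\mathcal{O}_{\PP^{7}}(1))=H^{0}(\mathcal{L}_{SL_2})$ gives the surjectivity of $\eta$. (Note this needs only that $\mathrm{M}_C(SL_2)$ is a quartic hypersurface, so it applies to every non hyper-elliptic genus‑$3$ curve.)

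\textbf{Genus $4$.} The Verlinde formula gives $h^{0}(\mathcal{M}_C(SL_2),\mathcal{L}_{SL_2}^{2})=2^{g-1}(2^{g}+1)=\binom{2^{g}+1}{2}=\dim\mathrm{Sym}^{2}H^{0}(\mathcal{L}_{SL_2})$, so the quadratic multiplication map $\eta_{2}:\mathrm{Sym}^{2}H^{0}(\mathcal{L}_{SL_2})\to H^{0}(\mathcal{L}_{SL_2}^{2})$ is surjective as soon as it is injective, i.e. as soon as $\mathrm{M}_C(SL_2)\subset\PP^{15}$ lies on no quadric. I would establish this injectivity from the explicit bases of $H^{0}(\mathcal{L}_{SL_2})$ and $H^{0}(\mathcal{L}_{SL_2}^{2})$ in \cite{BeauvilleII} (indexed respectively by points of order two and by even theta-characteristics), the hypothesis that $C$ has no effective theta-constant being exactly what kills any would‑be quadratic relation. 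Granting $\eta_{2}$ surjective, cubic normality follows by the standard bootstrap: the image of $\eta$ coincides with the image of the multiplication map $H^{0}(\mathcal{L}_{SL_2}^{2})\otimes H^{0}(\mathcal{L}_{SL_2})\to H^{0}(\mathcal{L}_{SL_2}^{3})$, and twisting the evaluation sequence $0\to K\to H^{0}(\mathcal{L}_{SL_2})\otimes\mathcal{O}_{\mathrm{M}_C(SL_2)}\to\mathcal{L}_{SL_2}\to 0$ (with $K$ the kernel bundle) by $\mathcal{L}_{SL_2}^{2}$ shows this last map is onto once $H^{1}(\mathrm{M}_C(SL_2),K\otimes\mathcal{L}_{SL_2}^{2})=0$; this vanishing I would deduce from the standard vanishing $H^{i}(\mathcal{M}_C(SL_2),\mathcal{L}_{SL_2}^{j})=0$ for $i>0,\ j\geq 0$ together with the control on quadrics obtained above.

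\textbf{Main obstacle.} The work is entirely in the genus‑$4$ case. The delicate point is the injectivity of $\eta_{2}$ — equivalently, the absence of a quadric through $\mathrm{M}_C(SL_2)\subset\PP^{15}$ — which is precisely where the ``no effective theta-constant'' hypothesis enters (it excludes in particular hyper-elliptic curves and curves with a vanishing theta-null) and where Beauville's explicit basis in \cite{BeauvilleII} is indispensable; the subsequent vanishing $H^{1}(\mathrm{M}_C(SL_2),K\otimes\mathcal{L}_{SL_2}^{2})=0$ for the kernel bundle is the remaining technical input.
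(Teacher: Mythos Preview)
Your treatment of genus $2$ and genus $3$ matches the paper's: the paper simply records that $\mathrm{M}_C(SL_2)\simeq\PP^{3}$ for $g=2$ (Narasimhan--Ramanan) and that $\mathrm{M}_C(SL_2)$ is the Coble quartic in $\PP^{7}$ for $g=3$ non-hyperelliptic, and says cubic normality is immediate in both cases. Your exact-sequence argument for the quartic hypersurface just makes explicit what the paper leaves to the reader.

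For genus $4$ the paper gives no argument of its own: it cites Theorem~4.1 of Oxbury--Pauly and stops. Your attempt at an independent proof, by contrast, has a real gap. The quadratic step is fine --- the isomorphism $\eta_{2}:\mathrm{Sym}^{2}V\xrightarrow{\sim}H^{0}(\mathcal{L}_{SL_2}^{2})$ for curves without effective theta-constant is indeed a theorem of Beauville (and the paper itself invokes it later). The problem is the bootstrap. Your reduction to the vanishing $H^{1}(\mathrm{M}_C(SL_2),K\otimes\mathcal{L}_{SL_2}^{2})=0$ is correct, but this vanishing cannot be extracted from quadratic normality together with $H^{i}(\mathcal{L}_{SL_2}^{j})=0$ for $i>0,\ j\geq 0$. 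Twisting the evaluation sequence by $\mathcal{L}_{SL_2}$ shows that quadratic normality is \emph{equivalent} to $H^{1}(K\otimes\mathcal{L}_{SL_2})=0$; to climb from there to $H^{1}(K\otimes\mathcal{L}_{SL_2}^{2})=0$ you would need a Castelnuovo--Mumford regularity argument, which requires controlling $H^{i}(K\otimes\mathcal{L}_{SL_2}^{2-i})$ for all $i\geq 1$, hence cohomology of \emph{negative} twists of $\mathcal{L}_{SL_2}$ on a singular variety --- none of which is covered by your ``standard vanishing''. Since (given $H^{1}(\mathcal{L}_{SL_2}^{2})=0$) the vanishing $H^{1}(K\otimes\mathcal{L}_{SL_2}^{2})=0$ is literally equivalent to the surjectivity of $H^{0}(\mathcal{L}_{SL_2})\otimes H^{0}(\mathcal{L}_{SL_2}^{2})\to H^{0}(\mathcal{L}_{SL_2}^{3})$, your argument is circular at this point. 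The substantive content you are missing is exactly what Oxbury--Pauly supply.
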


\begin{proof}
For a curve of genus $2$, $\mathcal{M}_C(SL_2)$ is isomorphic to $\PP^3$
and $\mathcal{L}_{SL_2}$ to $\mathcal{O}(1)$ (see \cite{NarasimhanRamanan}).
A non-hyper-elliptic curve of genus $3$ is a Coble quartic (see \cite{NR}).
The cubic normality is true in both of these cases.
For a general curve of genus $4$ without effective theta-constant, 
cubic normality is proved in Theorem
 $4.1$ of \cite{OxburyPauly}.
\end{proof}

When this conjecture is true, we get this theorem:

\begin{thm}
\label{Thm : Conj implique isom}
Let $C$ be a curve of genus at least $2$ without effective theta-constant and satisfying the cubic normality and let $\Psi$ be the map defined in Theorem \ref{Theorem definition Psi}:
$$\Psi : H^0(\mathcal{M}_C(G_2),\mathcal{L}_{G_2})\rightarrow 
\left[H^0(\mathcal{M}_C(SL_2), \mathcal{L}_{SL_2})\otimes  H^0(\mathcal{M}_C(SL_2), \mathcal{L}_{SL_2}^{3})\right]_0. $$
\begin{enumerate}
\item
 \label{item Proposition Conj normalité cubique implique Psi isomorphisme}
 The map $\Psi$ is an isomorphism,
\item
\label{fait 2 H0(MG2, LG2) engendré par les rho1 (delta kappa)}
The space of generalized $G_2$-theta functions $H^0(\mathcal{M}_C(G_2), \mathcal{L})$ is linearly generated by the divisors
$\rho_0^*(\Delta_\kappa)$ for $\kappa$ even theta-characteristic,
where $\rho_0$ is the extension morphism 
$$\rho_0 : \mathcal{M}_C(G_2) \rightarrow \mathcal{M}_C(SO_7).$$
\end{enumerate}
\end{thm}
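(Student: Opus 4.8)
The plan is to reduce both assertions to a comparison of dimensions, using the surjectivity of $\Psi$ obtained from cubic normality. First I would establish the surjectivity of $\Psi$. The key input is Lemma \ref{Lem Inclusion SL2*SL2 dans G2 and associated vector bundle}, which identifies the rank-$7$ vector bundle attached to $(E,F)$ as $E^*\otimes F\oplus \mathrm{End}_0(F)$, together with the computation $j^*(\mathcal{L}_{G_2})=\mathcal{L}_{SL_2}\boxtimes \mathcal{L}_{SL_2}^{3}$ from Theorem \ref{Theorem definition Psi}. Pulling back the divisors $\Delta_\kappa\subset \mathcal{M}_C(SL_7)$ along $\rho_1\circ \mathrm j$ and using $h^0(C,\kappa)=0$ (no effective theta-constant) and Serre duality on $\mathrm{End}_0(F)$, one computes $\mathrm j^*\rho_1^*(\Delta_\kappa)$ as an explicit $JC[2]$-invariant element of $H^0(\mathcal{M}_C(SL_2),\mathcal L)\otimes H^0(\mathcal{M}_C(SL_2),\mathcal L^{3})$; here the $\mathcal{L}$-factor comes from $H_\kappa\subset\mathcal M_C(SL_2)$ and the $\mathcal L^{3}$-factor from the pairing of $E^*\otimes F$ with $\mathrm{End}_0(F)$ inside the determinant divisor. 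The point is to show these span the target; since $h^0(\mathcal M_C(SL_2),\mathcal L)=2^g$ has an explicit theta-function basis indexed by $2$-torsion (or by Beauville's basis of $H^0(\mathcal M_C(SL_2),\mathcal L^2)$), and cubic normality identifies $H^0(\mathcal M_C(SL_2),\mathcal L^{3})$ as a quotient of $\mathrm{Sym}^3$ of this $2^g$-dimensional space, the span can be exhibited by an explicit combinatorial argument on the even theta-characteristics, analogous to the $SL_3$ case in Theorem \ref{Thm Application Phi}.

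Once $\Psi$ is surjective, part \eqref{item Proposition Conj normalité cubique implique Psi isomorphisme} is immediate: by Lemma \ref{Lem dim[H0(MSL2,LSL2) and H0(MSL2, LSL2 3)]0 = dim H0(MG2,LG2)} the source and target of $\Psi$ have equal (finite) dimension, so a surjective linear map between them is an isomorphism. This is the cleanest step and relies only on already-established facts.

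For part \eqref{fait 2 H0(MG2, LG2) engendré par les rho1 (delta kappa)}, the strategy is to transport the spanning statement back through the isomorphism $\Psi$. Writing $\rho_0=\rho_1$ (the extension $\mathcal M_C(G_2)\to\mathcal M_C(SO_7)$, whose composition with the inclusion $SO_7\hookrightarrow SL_7$ is the $\rho_1$ of the diagram), one has $\Psi(\rho_0^*(\Delta_\kappa))=\mathrm j^*\rho_0^*(\Delta_\kappa)=$ the explicit invariant element computed in the surjectivity argument. Since those images span $[H^0(\mathcal M_C(SL_2),\mathcal L)\otimes H^0(\mathcal M_C(SL_2),\mathcal L^{3})]_0$ and $\Psi$ is injective, the $\rho_0^*(\Delta_\kappa)$ must already span $H^0(\mathcal M_C(G_2),\mathcal L)$: a linear combination of them killed in the source would map to a relation among spanning vectors in the target, forcing it to span. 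One subtlety to address is that each $\Delta_\kappa$ is only defined up to scalar as a section (it is a priori a Cartier divisor), so I would note that the relevant statement is about the linear span of the associated lines, which is what injectivity of $\Psi$ transports faithfully.

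The main obstacle is the surjectivity of $\Psi$, i.e. showing the explicitly computed images $\mathrm j^*\rho_1^*(\Delta_\kappa)$ span the invariant tensor space. Unlike the $SL_3$ case, where one could invoke \cite{BNR} to pass to $|3\Theta|$ on $\mathrm{Pic}^{g-1}(C)$ and then \cite{SermanKopeliovichPauly} for the $|4\Theta|$ spanning result, here the $\mathcal L^{3}$ factor on the second $SL_2$ has no such clean Picard-variety model, which is exactly why cubic normality must be assumed: it lets one replace $H^0(\mathcal M_C(SL_2),\mathcal L^{3})$ by a quotient of $\mathrm{Sym}^3 H^0(\mathcal M_C(SL_2),\mathcal L)$ and reduce the spanning question to one about products of the $2^g$ classical $SL_2$-theta functions evaluated along the loci $H_\kappa$. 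Making this reduction precise — identifying the image of $\eta$ compatibly with the $JC[2]$-action and checking the resulting system of even-theta-characteristic vectors has full rank (again a dimension count, $2^{g-1}(2^g+1)$ versus the dimension of the invariant space) — is the technical heart; the rest is bookkeeping with the tables of Appendix \ref{appendix The octonion algebra} and the Dynkin-index computations already recorded.
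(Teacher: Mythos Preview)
Your overall architecture matches the paper's: compute the images $\Psi(\rho_0^*(\Delta_\kappa))$, show they span the target, conclude by the dimension equality of Lemma~\ref{Lem dim[H0(MSL2,LSL2) and H0(MSL2, LSL2 3)]0 = dim H0(MG2,LG2)}, and then transport the spanning back through the isomorphism for part~\eqref{fait 2 H0(MG2, LG2) engendré par les rho1 (delta kappa)}. That part is fine.

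The gap is in your identification of the pullback and, consequently, in the spanning argument. You describe the image as having ``$\mathcal{L}$-factor from $H_\kappa$'' and ``$\mathcal{L}^3$-factor from the pairing of $E^*\otimes F$ with $\mathrm{End}_0(F)$''; this is not how the divisor splits. Since the rank-$7$ bundle is $\mathrm{Hom}(E,F)\oplus\mathrm{End}_0(F)$, the pullback of $\Delta_\kappa$ is the \emph{sum} of two divisors
\[
\Delta_2=\{(E,F):h^0(\mathrm{Hom}(E,F)\otimes\kappa)>0\},\qquad
\Delta_1=\{(E,F):h^0(\mathrm{End}_0(F)\otimes\kappa)>0\},
\]
with $\mathcal{O}(\Delta_2)=\mathcal{L}\boxtimes\mathcal{L}$ and $\mathcal{O}(\Delta_1)=\mathcal{O}\boxtimes\mathcal{L}^2$. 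In Beauville's notation \cite{BeauvilleII} these are cut out by $\xi_\kappa\in V\otimes V$ and $d_\kappa\in H^0(\mathcal{L}^2)$ respectively, so $\Psi(\rho_0^*(\Delta_\kappa))=\beta(\xi_\kappa\otimes d_\kappa)$, where $\beta:[V\otimes V\otimes H^0(\mathcal{L}^2)]_0\to[V\otimes H^0(\mathcal{L}^3)]_0$ multiplies the last two factors. No Serre duality on $\mathrm{End}_0(F)$ is used, and no single-factor $H_\kappa$ appears. The spanning step then proceeds not by a direct ``combinatorial rank check'' on theta-characteristics but by invoking \cite{BeauvilleII} and \cite{BNR}: the $\{\xi_\kappa\}$ form a basis of $\mathrm{Sym}^2V$, the $\{d_\kappa\}$ a basis of $H^0(\mathcal{L}^2)$, and $\xi_\kappa\mapsto d_\kappa$ is an isomorphism between them, whence $\{\xi_\kappa\otimes d_\kappa\}$ generates $[V\otimes V\otimes H^0(\mathcal{L}^2)]_0$. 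Surjectivity of $\Psi$ then reduces to surjectivity of $\beta$, which is exactly what cubic normality gives via $V\otimes H^0(\mathcal{L}^2)\twoheadrightarrow H^0(\mathcal{L}^3)$ restricted to $JC[2]$-invariants. Without this factorization through $\xi_\kappa\otimes d_\kappa$ and $\beta$, you have no bridge between the $\Delta_\kappa$-images and the map $\eta$, so your proposed reduction does not go through as stated.
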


\begin{proof}
\begin{enumerate}
\item
According to the dimension equality proved in Lemma \ref{Lem dim[H0(MSL2,LSL2) and H0(MSL2, LSL2 3)]0 = dim H0(MG2,LG2)}, 
it suffices to prove the surjectivity of $\Psi$.

Denote by $V$ the vector space $H^0(\mathcal{M}_C(SL_2), \mathcal{L})$.

Using the notations of \cite{BeauvilleII}, we associate to each even theta-characteristic $\kappa$ an element
$d_\kappa$ of $H^0(\mathcal{M}_C(SL_2), \mathcal{L}^{\otimes 2})$ and an element $\xi_\kappa $ of $V\otimes V$.
 For each even theta-characteristic $\kappa$, $d_\kappa$ is the section of $H^0(\mathcal{M}_C(SL_2), \mathcal{L}^{\otimes 2})$ such that $D_\kappa$ is the divisor of the zeros of $d_\kappa$, where $D_\kappa=\{S \in M_C(SL_2)\ |\  h^0(C ,\mathrm{End}_0(S)\otimes \kappa)>0\}$.
Consider the following maps:
$$
\begin{array}{rcl}
\rho_0^* :&H^0( \mathcal{M}_C^+(SO_7), \mathcal{L})&\longrightarrow H^0(\mathcal{M}_C(G_2),\mathcal{L})\\
\text{and}\quad \beta :&[ V \otimes V \otimes H^0(\mathcal{M}_C(SL_2), \mathcal{L}^{2})]_0&\longrightarrow [V \otimes H^0(\mathcal{M}_C(SL_2), \mathcal{L}^{3}]_0
\end{array}
$$
where $\beta(A,B,D)=(A,BD)$. 
For any even theta-characteristic $\kappa$, the image $\Psi(\rho_0^*(\Delta_\kappa))$ equals $\beta(\xi_\kappa\otimes d_\kappa)$. Indeed, $\Psi$ is induced by:
$$
\begin{array}{rrcl}
\mathrm{j}:&\mathcal{M}_C(SL_2)\times\mathcal{M}_C(SL_2) &\rightarrow &\mathcal{M}_C(G_2) \\
&(E,F) & \mapsto &\mathrm{Hom}(E,F)  \oplus \mathrm{End}_0(F);
\end{array}
$$
the pull-back $\Psi(\rho^*_0(\Delta_\kappa))$ is the sum of two divisors:
$$
\begin{array}{rcl}
\Delta_1 & =&\{(E,F)\in \mathcal{M}_C(SL_2)\times\mathcal{M}_C(SL_2) \ |\ h^0(C,\mathrm{End}_0(F) \otimes \kappa) >0 \},\\
\Delta_2 & =&\{(E,F)\in \mathcal{M}_C(SL_2)\times\mathcal{M}_C(SL_2) \ |\ h^0(C, \mathrm{Hom}(E,F) \otimes \kappa >0 \}.
\end{array}
$$
In addition, $\mathcal{O}(\Delta_1)=\mathcal{O}_C \boxtimes \mathcal{L}^{2}$ and $\mathcal{O}(\Delta_2)= \mathcal{L} \boxtimes \mathcal{L}$ (see \cite{BeauvilleII}) and more precisely:
 $$\Delta_1 = \text{Zeros}(d_\kappa) \text{ et } \Delta_2= \text{Zeros}(\xi_\kappa) .$$
 When the curve $C$ is of genus at least $2$ without effective theta-constant,
it is proved in \cite{BNR} that the map
$$
\begin{array}{rrcl}
\varphi_0^* : & \mathrm{Sym}^2 V &\longrightarrow& H^0(\mathcal{M}_C(SL_2), \mathcal{L}^{2})\\
& \xi_\kappa &\mapsto& d_\kappa 
\end{array}
$$
is an isomorphism. We identify $\mathrm{Sym}^2 V$ with the invariant space of $V\otimes V$ under the involution $a\otimes b\mapsto b\otimes a$. 
 By Theorem $1.2$ and Proposition A.$5$ of \cite{BeauvilleII}, the set $\{d_\kappa\ |\ \kappa \in\Theta^{\mathrm{even}}(C) \}$ is a basis of $H^0(\mathcal{M}_C(SL_2), \mathcal{L}^{2})$ and $\{\xi_\kappa\ |\ \kappa \in\Theta^{\mathrm{even}}(C)\} $ is a basis of $\mathrm{Sym}^2 V$.
 Then, the vector space $[ V \otimes V \otimes H^0(\mathcal{M}_C(SL_2), \mathcal{L}^{2})]_0$ is generated by $\{\xi_\kappa \otimes d_\kappa \ |\ \kappa \in\Theta^{\mathrm{even}}(C) \}$. 
Thus, to prove the surjectivity of the map $\Psi$, it is sufficient to show the surjectivity of the map $\beta$.
Consider the following diagram:
\begin{diagram}
V\otimes H^0(\mathcal{M}_C(SL_2),\mathcal{L}^{2})&\rTo &H^0(\mathcal{M}_C(SL_2),\mathcal{L}^{3})\\
\uTo &&\uTo_{\eta} \\
V \otimes V\otimes V &\rTo & \mathrm{Sym}^3V.
\end{diagram}
With the hypothesis of cubic normality, the map $\eta$ is surjective. Therefore the map $V\otimes H^0(\mathcal{M}_C(SL_2),\mathcal{L}^{2})  \rightarrow H^0(\mathcal{M}_C(SL_2),\mathcal{L}^{3})$ is also surjective. By restriction to invariant sections under the action of $JC[2]$, $\beta$ is surjective.

The map $\Psi$ is thus an isomorphism.
\item
The point \eqref{fait 2 H0(MG2, LG2) engendré par les rho1 (delta kappa)} is a consequence of the previous facts:
for each element $\kappa \in  \Theta^{\mathrm{even}}(C)$, the image of $\rho_0^*(\Delta_\kappa)$ by $\Psi$ is $\xi_\kappa \otimes d_\kappa$. 
As $\{\xi_\kappa \otimes d_\kappa \ |\ \kappa \in  \Theta^{\mathrm{even}}(C)\}$ generates $[ V \otimes V \otimes H^0(\mathcal{M}_C(SL_2), \mathcal{L}^{2})]$, the set $\{\rho_0^*(\Delta_\kappa)\ |\ \kappa \in  \Theta^{\mathrm{even}}(C)\}$ generates $H^0(\mathcal{M}_C(G_2), \mathcal{L})$.
\end{enumerate}
\end{proof}

\begin{rem}
By Proposition \ref{Prop normalité cubique vraie}
the linear map $\Psi$ is an isomorphism for
each curve of genus $2$, each non hyperelliptic curve of genus $3$ and each curve of genus $4$ without effective theta-constant.
\end{rem}

\appendix
\label{appendix}

\section{The octonion algebra}
\label{appendix The octonion algebra}

Let $\mathcal{B}_0=\{e_1, \dots, e_7\}$ be the canonical basis of the
 subalgebra of the imaginary part of octonions.The multiplication rules are:
 \begin{itemize}
\item
$\forall i \in \{1, \dots , 7\},\quad e_i^2=-1$,
\item
$e_ie_j=-e_je_i=e_k$ when $(e_i, e_j, e_k)$ are three points on the same edge of on the oriented Fano diagram.
\end{itemize}

\begin{figure}[!h]

\begin{picture}(115,90)(0,-90)

\gasset{AHangle=23.5,AHLength=3.76,AHlength=2.8}

\drawqbezier[AHnb=2,AHdist=15.0,AHLength=3.76](39,-44,57.45,-24.1,75.9,-44)
\drawqbezier[AHnb=2,AHdist=15.0,AHLength=3.76](75.9,-44,85.3,-67,57.5,-76)
\drawqbezier[AHnb=2,AHdist=15.0,AHLength=3.76](57.5,-76,29.6,-67,39,-44)


\drawline[AHnb=4,AHdist=18.0](94.3,-76)(20.6,-76)
\drawline[AHnb=4,AHdist=18.0](20.6,-76)(57.5,-12.1)
\drawline[AHnb=4,AHdist=18.0](57.5,-12.1)(94.3,-76)
\drawline[AHnb=2,AHdist=5.0](39,-44)(57.5,-54.7)
\drawline[AHnb=2,AHdist=5.0](75.9,-44)(57.5,-54.7)
\drawline[AHnb=2,AHdist=5.0](57.5,-76)(57.5,-54.7)
\drawline[AHnb=2,AHdist=10.0](57.5,-54.7)(57.5,-12.1)
\drawline[AHnb=2,AHdist=10.0](57.5,-54.7)(20.6,-76)
\drawline[AHnb=2,AHdist=10.0](57.5,-54.7)(94.3,-76)
\node[Nfill=y,fillgray=1.0,NLangle=0.0,Nadjustdist=2.0,Nw=10.0,Nh=10.0,Nmr=5.0](n1)(39.0,-44.0){$e_1$}

\node[Nfill=y,fillgray=1.0,NLangle=0.0,Nadjustdist=2.0,Nw=10.0,Nh=10.0,Nmr=5.0](n2)(75.9,-44.0){$e_2$}

\node[Nfill=y,fillgray=1.0,NLangle=0.0,Nadjustdist=2.0,Nw=10.0,Nh=10.0,Nmr=5.0](n3)(57.5,-76.0){$e_3$}

\node[Nfill=y,fillgray=1.0,NLangle=0.0,Nadjustdist=2.0,Nw=10.0,Nh=10.0,Nmr=5.0](n4)(57.5,-54.7){$e_4$}

\node[Nfill=y,fillgray=1.0,NLangle=0.0,Nadjustdist=2.0,Nw=10.0,Nh=10.0,Nmr=5.0](n5)(57.5,-12.1){$e_5$}

\node[Nfill=y,fillgray=1.0,NLangle=0.0,Nadjustdist=2.0,Nw=10.0,Nh=10.0,Nmr=5.0](n6)(20.6,-76.0){$e_6$}

\node[Nfill=y,fillgray=1.0,NLangle=0.0,Nadjustdist=2.0,Nw=10.0,Nh=10.0,Nmr=5.0](n7)(94.3,-76.0){$e_7$}

\end{picture}

\caption{Fano Diagram}
\end{figure}
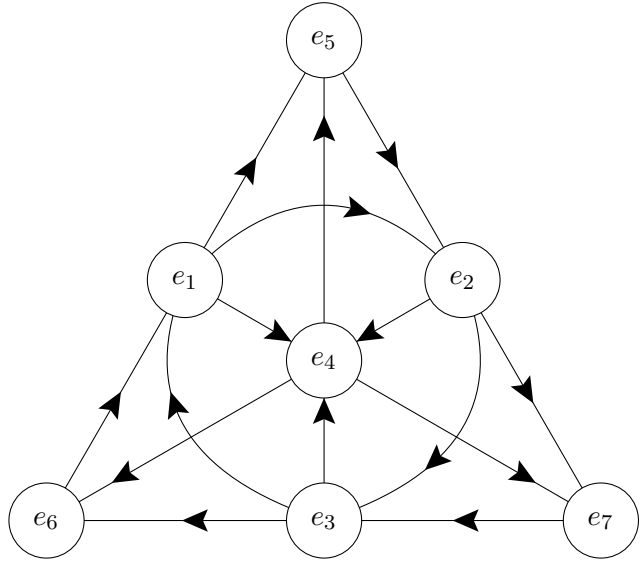


We introduced the basis $\mathcal{B}_1=\{y_1, \dots ,y_7\}$ obtained by 
basis change by
the change of basis matrix 
$P=\frac{\sqrt{2}}{2} \left( 
\begin{array}{ccc|ccc|c}
0&1&0&0&1&0&0\\
1&0&0&1&0&0&0\\
0&0&1&0&0&1&0\\
\hline
0&-i&0&0&i&0&0\\
-i&0&0&i&0&0&0\\
0&0&-i&0&0&i&0\\
\hline
0&0&0&0&0&0&\sqrt{2}\\
\end{array}
\right).
$
The canonical quadratic form on $\mathrm{Im} \OO$ expressed in the basis $\mathcal{B}_1$ is
$
Q=~
\left(
\begin{array}{r|c|l}
			 0 		& I_3	&0 \\
\hline I_3 	&0 		& 0 \\
\hline 0		&0 		&1
\end{array}
\right).
$
In the basis $\mathcal{B}_2=\{y_2,y_3,y_4,y_5,y_6,y_1,y_7\}$, the multiplication table is
\begin{align}
\label{tabular B2}
\begin{array}{|c||c|c|c||c|c|c||c|}
\hline
\nearrow & y_2 & y_3 & y_4 & y_5 & y_6 & y_1 & y_7\\
\hline
\hline
 y_2 & 0& - \sqrt{2}  y_1 & \sqrt{2}  y_6 & -1+i y_7 &0&0&-i y_2\\
 \hline
 y_3& \sqrt{2} y_1&0&-\sqrt{2} y_5&0&-1+i y_7&0&-i y_3 \\
 \hline
 y_4&-\sqrt{2} y_6& \sqrt{2} y_5&0&0&0&-1+i y_7&-i y_4\\
 \hline
 \hline
 y_5&-1-i y_7&0&0&0&-\sqrt{2} y_4&\sqrt{2} y_3&i y_5\\
 \hline
  y_6&0&-1-i y_7&0&\sqrt{2} y_4&0&-\sqrt{2} y_2&i y_6\\
  \hline
 y_1&0&0&-1-i y_7&-\sqrt{2} y_3&\sqrt{2} y_2&0&i y_1\\
 \hline
 \hline
  y_7& i y_2&i y_3&i y_4&-i y_5&-i y_6&-i y_1&-1\\
  \hline 
 \end{array}
 \end{align}
In the basis $\mathcal{B}_3=\{y_1,y_2,y_4,y_5,y_3,y_6,y_7\}$, the multiplication table is
\begin{align}
\label{tabular B3}
\begin{array}{|c||c|c||c|c||c|c|c|}
\hline
\nearrow	& y_1 & y_2 & y_4 & y_5 & y_3 & y_6 & y_7 \\
\hline
\hline
y_1 & 0 & 0 & -1-i y_7 & - \sqrt{2} y_3 & 0 &\sqrt{2}y_2  &i y_1 \\
\hline
y_2 & 0 & 0 & \sqrt{2} y_6 & -1+ i y_7 & -\sqrt{2} y_1 & 0 & -i y_2 \\
\hline
\hline
y_4 & -1+i y_7 & -\sqrt{2} y_6 & 0 & 0 & \sqrt{2} y_5 & 0 & -i y_4 \\
\hline
y_5 & \sqrt{2} y_3 & -1-i y_7 & 0 & 0 & 0 &-\sqrt{2} y_4 & i y_5 \\
\hline
\hline
y_3 & 0 & \sqrt{2} y_1 & - \sqrt{2}y_5 & 0 & 0 & -1+i y_7 & -i y_3 \\
\hline
y_6 & - \sqrt{2} y_2 & 0 & 0 & \sqrt{2} y_4 & -1-i y_7 & 0 & i y_6 \\
\hline
y_7 & -i y_1 & i y_2 & i y_4 & -i y_5 & iy_3 & -iy_6 & -1 \\
\hline
\end{array}
\end{align}

\section{Computation of $h^0(\mathcal{M}_C(G_2),\mathcal{L})$}
\label{appendix h0(G2,L)}

First, we recall the Verlinde Formula.

\begin{prop}[Verlinde Formula]
Let $G$ be a Lie group with Lie algebra $\got{g}$ of classical type or type $\g$,
$\mathcal{L}$ be the ample canonical line bundle on $\mathcal{M}_C(G)$
and $i$ be a positive integer. The integer $h^0(\mathcal{M}_C(G),\mathcal{L}^i)$ is given by the following relation:
 $$h^0(\mathcal{M}_C(G),\mathcal{L}^i)=(\#T_i)^{g-1}\sum_{\mu\in \mathcal{P}_i}\prod_{\alpha\in \Delta_+}\left[2 \sin\left(\frac{\pi\langle \alpha,\mu+\rho \rangle}{i+g^*}\right)\right]^{2-2g}.$$
where
$$
\begin{array}{rclcrcl}
\#T_i&=&(i+g^*)^{\mathrm{rk} ( \mathfrak{g})} \#(\mathcal{P}/Q) \#(Q/Q_{\text{lg}}),&\ & \langle \cdot, \cdot \rangle &\text{is}&\text{the Killing form},\\
&&\text{ where } \mathrm{rk}(\mathfrak{g})\text{ is the rank of } \got{g}, && \mathcal{P}_i&=&\{\text{dominant weights } \mu \  | \ \langle \mu, \theta \rangle \leq i\},\\
\mathcal{P} &\text{is}& \text{the weight lattice},&&\theta & \text{is}&\text{the maximal positive root},\\
Q &\text{is}& \text{the root lattice},&&\Delta_+&=&\{\text{positive roots of }\got{g}\}\\
Q_{\text{lg}} &\text{is}& \text{the long root lattice},&& g^* &\text{is}& \text{the dual Coxeter number of the group }G .\\
\rho &=&\frac{1}{2}\sum_{\alpha_j \in \Delta_+} \alpha_j,&& &&
\end{array}
$$
\end{prop}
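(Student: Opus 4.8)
The Verlinde formula as stated is a celebrated theorem rather than something one establishes in a few lines, so the realistic plan is to assemble it from the theory of conformal blocks; I indicate the steps, keeping in mind that $G$ is simple and simply-connected (as fixed in the notation), so $\mathrm{Pic}(\mathcal{M}_C(G))\simeq\Z$ is generated by $\mathcal{L}$. \emph{Step 1 (passage to conformal blocks).} Using uniformization of the moduli stack by the loop group together with the Borel--Weil-type description of its line bundles (Kumar--Narasimhan--Ramanathan \cite{KNR}, Beauville--Laszlo, Faltings, Laszlo--Sorger \cite{LaszloSorger}), one identifies
$$H^0(\mathcal{M}_C(G),\mathcal{L}^i)\;\simeq\;\mathcal{V}_C(\got g,i),$$
the space of genus-$g$ WZW conformal blocks at level $i$ for the affine Lie algebra $\widehat{\got g}$, with the vacuum module inserted at an auxiliary point. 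The one delicate input here is that the ample generator $\mathcal{L}$ corresponds to level one: for non-simply-laced $\got g$ (the case $\g$) this is a Dynkin-index computation of exactly the type performed in Proposition~2.6 of \cite{LaszloSorger} and used elsewhere in this paper, and it is one reason the statement is restricted to classical types and to $\g$, where all the normalizations have been checked.

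\emph{Step 2 (factorization).} Degenerate $C$ to an irreducible one-nodal curve, then iterate until the dual graph is trivalent with $g$ independent loops. By the gluing axiom for conformal blocks (Tsuchiya--Ueno--Yamada; in the algebro-geometric setting Faltings, Beauville, Sorger \cite{SorgerModuliofPrincipalbundles}), the block space over a nodal curve decomposes as $\bigoplus_{\lambda\in\mathcal{P}_i}\mathcal{V}(\dots;\lambda,\lambda^*)$ over the normalization with the two preimages of the node decorated by a level-$i$ weight and its dual; combined with propagation of vacua, this expresses $\dim\mathcal{V}_C(\got g,i)$ as a trivalent-graph contraction of the three-point fusion numbers $N_{\lambda\mu\nu}=\dim\mathcal{V}_{\PP^1}(\got g,i;\lambda,\mu,\nu)$.

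\emph{Step 3 (diagonalization and the $S$-matrix).} The numbers $N_{\lambda\mu}^{\ \nu}$ are the structure constants of the fusion ring $R_i(\got g)$, a finite-dimensional commutative associative $\C$-algebra with distinguished basis $\{e_\lambda\}_{\lambda\in\mathcal{P}_i}$; one checks it is semisimple, hence diagonalized by a symmetric unitary matrix $S=(S_{\lambda\mu})$, and the graph contraction of Step 2 collapses to the closed form
$$\dim\mathcal{V}_C(\got g,i)\;=\;\sum_{\lambda\in\mathcal{P}_i}(S_{0\lambda})^{2-2g}.$$
Finally $S$ is the Kac--Peterson modular matrix; evaluating the Weyl-alternating sum $\sum_{w\in W}(\det w)\,e^{-2\pi i\langle w(\lambda+\rho),\rho\rangle/(i+g^*)}$ by the Weyl denominator identity gives
$$S_{0\lambda}\;=\;\bigl((i+g^*)^{\mathrm{rk}(\got g)}\,\#(\mathcal{P}/Q)\bigr)^{-1/2}\,\prod_{\alpha\in\Delta_+}2\sin\!\Bigl(\frac{\pi\langle\alpha,\lambda+\rho\rangle}{i+g^*}\Bigr),$$
and substituting this into the previous display yields exactly the asserted formula, with the prefactor $(\#T_i)^{g-1}$ collecting $(S_{00})^{2-2g}$ together with the factor $\#(Q/Q_{\text{lg}})^{g-1}$ recording the difference between the inner-product normalization used in the statement and the basic one on $\widehat{\got g}$ (trivial for simply-laced $\got g$, genuinely present for $\g$).

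\emph{Main obstacle.} The crux is Step 2: proving that the conformal blocks fit into a locally free sheaf over the moduli of (possibly nodal) pointed curves and that the sewing map is an isomorphism --- flatness along the boundary, local freeness, and the gluing isomorphism --- which is the technical heart of the work of Faltings and Sorger. The only other non-routine point is the normalization bookkeeping in Steps~1 and~3 (Dynkin indices, the Killing-form normalization, the comparison of $\mathcal{L}$ with the basic level-one line bundle): this is precisely where $\#(\mathcal{P}/Q)$ and $\#(Q/Q_{\text{lg}})$ enter, and why the clean statement above is available only for classical $\got g$ and for $\g$.
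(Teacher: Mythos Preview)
Your outline is a correct sketch of the standard proof of the Verlinde formula via conformal blocks, factorization, and the Kac--Peterson $S$-matrix. The paper, however, does not prove this proposition at all: its entire proof is the single line ``See the survey \cite{Sorger}.'' So there is nothing to compare at the level of argument --- you have supplied far more than the paper does, and what you have supplied is essentially the content one would find by following that citation.
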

\begin{proof} See the survey \cite{Sorger}.
\end{proof}
The root system of $G_2$ has six positive roots $\alpha_1,\dots, \alpha_6$, with two simple roots, called $\alpha_1$ and $\alpha_2$.

For $H^0(\mathcal{M}_C(G_2),\mathcal{L})$, the data used in the Verlinde Formula are 
$g^*=4$, $ \mathrm{rk}(\got{g}_2)=2 $, $\#T_1=(1+4)^2\times 1 \times 3=75$, $\rho =5\alpha_1 + 3\alpha_2$, $\theta =\alpha_6=3\alpha_1+2\alpha_2$
and the fundamental weights are $\varpi_1=\alpha_5$ and $\varpi_2=\alpha_6$.

To describe $ \mathcal{P}_1$, we describe the Killing form.
The angle formed between $\alpha_1$ and $\alpha_2$ is $5 \pi/6$ and the ratio $\|\alpha_2\|/\|\alpha_1\|= \sqrt{3}$.
In order to normalize the Killing form, we impose the norm of maximal positive root $\|\theta\|$ equals $2$ . As $\theta =\alpha_6$, $\|\alpha_2\|=\|\theta\|=2$ and then $\|\alpha_1\|=2/3$ and $\langle \alpha_1,\alpha_2 \rangle=-1$.
The evaluations of the Killing form on $\varpi_i$ and $\theta$ are
$\langle \varpi_1, \theta \rangle =1$ and $\langle \varpi_2, \theta \rangle =2$; so
 $\mathcal{P}_1=\{0, \varpi_1\}$.

The evaluation of the Killing form on each positive root and $\rho$ added to each value of $\mathcal{P}_1$ are the following:
$$
\begin{array}{rclcrcl}
\langle \alpha_1 , \rho \rangle &=& 1/3 & \text{and} &   \langle \alpha_1 , \varpi_1+\rho \rangle &=& 2/3  ,    \\
\langle \alpha_2 , \rho \rangle &=&   1, &&	\langle \alpha_2 , \varpi_1+\rho \rangle &=&    1        ,       \\
\langle \alpha_3 , \rho \rangle &=& 4/3, &&	\langle \alpha_3 , \varpi_1+\rho \rangle &=&    5/3     , 		\\
\langle \alpha_4 , \rho \rangle &=& 5/3,	&&	\langle \alpha_4 , \varpi_1+\rho \rangle &=&    7/3     , 		\\
\langle \alpha_5 , \rho \rangle &=&   2,	&& 	\langle \alpha_5 , \varpi_1+\rho \rangle &=&    3     	,	\\
\langle \alpha_6 , \rho \rangle &=&   3,	&&  \langle \alpha_6 , \varpi_1+\rho \rangle &=&     4   	.	\\
\end{array}
$$

So, by the Verlinde formula, the dimension $h^0(\mathcal{M}_C(G_2),\mathcal{L})$ is 
$$\begin{array}{rl}
h^0(\mathcal{M}_C(G_2),\mathcal{L}) =&\left(\frac{2^{10}}{25}\right)^{1-g}
\Big[\left[\sin^2\left(\frac{\pi}{15}\right)\sin^2\left(\frac{4\pi}{15}\right)\sin^2\left(\frac{\pi}{5}\right)\sin^4\left(\frac{2\pi}{5}\right)\right]^{1-g}\\
&+\left[\sin^2\left(\frac{2\pi}{15}\right)\sin^2\left(\frac{7\pi}{5}\right)\sin^2\left(\frac{2\pi}{5}\right)\sin^4\left(\frac{\pi}{5}\right)\right]^{1-g}
\Big].
\end{array}
$$
To obtain a compact formula, we express these trigonometric products in $\mathbb{Q}(\sqrt{5})$:
$$
\begin{array}{clccl}
\sin^2\left(\frac{\pi}{5}\right)&=\frac{1}{8}(5-\sqrt{5}),&\quad&
\sin^2\left(\frac{\pi}{15}\right)\sin^2\left(\frac{4\pi}{15}\right)&= \frac{1}{2^5} (3-\sqrt{5}),\\
\sin^2\left(\frac{2\pi}{5}\right)&=\frac{1}{8}(5+\sqrt{5}),&&
\sin^2\left(\frac{2\pi}{15}\right)\sin^2\left(\frac{7\pi}{15}\right)&= \frac{1}{2^5} (3+\sqrt{5}),\\
\sin^4\left(\frac{\pi}{5}\right)&=\frac{5}{2^5}(3-\sqrt{5}),&&
\sin^4\left(\frac{2\pi}{5}\right)&=\frac{5}{2^5}(3+\sqrt{5}).
\end{array}
$$
So,
$$
\begin{array}{cl}
&h^0(\mathcal{M}_C(G_2),\mathcal{L}) \\
=& \left(\frac{2^{10}}{25}\right)^{1-g} 
\Big[
\left[ 
\left(\frac{1}{2^5} \right) 
\left(\frac{1}{2^3} \right)
\left(\frac{5}{2^5}\right) 
\left(3-\sqrt{5}\right)
\left(5-\sqrt{5}\right) 
\left(3+\sqrt{5}\right) 
\right]^{1-g} \\
&+
\left[ 
\left(\frac{1}{2^5} \right)
\left(\frac{1}{2^3} \right)
\left( \frac{5}{2^5}\right) 
\left(3+\sqrt{5}\right)
\left(5+\sqrt{5}\right)
\left(3-\sqrt{5}\right) 
\right]^{1-g}
\Big],\\
=& \left(\frac{5-\sqrt{5}}{10}\right)^{1-g} +\left(\frac{5+\sqrt{5}}{10}\right)^{1-g}
= \left(\frac{5+\sqrt{5}}{2}\right)^{g-1} +\left(\frac{5-\sqrt{5}}{2}\right)^{g-1} .
\end{array}
$$

\section{Computation of $h^0(\mathcal{M}_C(SL_2),\mathcal{L}^3)$}
\label{appendix h0(SL2,L3)}
Using the previous notations, the evaluation on $\alpha$ and each element of $\mu + \rho $, where $\mu \in\mathcal{P}_3$, are the following:
$$
\begin{array}{rclcrcl}
\langle \alpha, \rho \rangle          &=&1, & \text{and} &\langle \alpha, 2\varpi_1+\rho \rangle &=&3,\\
\langle \alpha, \varpi_1+\rho \rangle  &=&2,&&\langle \alpha, 3\varpi_1+\rho \rangle  &=&4.\\
\end{array}
$$
By the Verlinde Formula,
$$
\begin{array}{cl}
&h^0(\mathcal{M}_C(SL_2),\mathcal{L}^{\otimes 3})\\
=&
(10)^{g-1}(2)^{2-2g}\left[\left[\sin\left(\frac{\pi}{5}\right)\right]^{2-2g} + \left[\sin\left(\frac{2\pi}{5}\right)\right]^{2-2g} +\left[\sin\left(\frac{3\pi}{5}\right)\right]^{2-2g} +\left[\sin\left(\frac{4\pi}{5}\right)\right]^{2-2g}\right],\\
=&
\left(\frac{5}{2}\right)^{g-1} \left[2\left[\sin^2\left(\frac{\pi}{5}\right)\right]^{1-g} + 2\left[\sin^2\left(\frac{2\pi}{5}\right)\right]^{1-g}\right],\\
=&2\left(\frac{5}{2}\right)^{g-1}\left(\left(\frac{8}{5-\sqrt{5}}\right)^{g-1} + \left(\frac{8}{5+\sqrt{5}}\right)^{g-1}\right),\\
=&2^g\left[\left(\frac{5+\sqrt{5}}{2}\right)^{g-1}+
\left(\frac{5-\sqrt{5}}{2}\right)^{g-1}\right] .
\end{array}
$$

\section*{Acknowledgements}

I would like to thank Christian PAULY for all his valuable suggestions and comments.

\bibliography{Biblio}
\bibliographystyle{smfalpha}
\end{document}